\theoremstyle{plain}
\newtheorem{theorem}{Theorem}[section]
\newtheorem{lemma}[theorem]{Lemma}
\newtheorem{definition}[theorem]{Definition}
\newtheorem{corollary}[theorem]{Corollary}
\newtheorem{example}[theorem]{Example}
\newtheorem{remark}[theorem]{Remark}
\newtheorem{note}[theorem]{Note}
\begin{document}

\title{On ML-Certificate Linear Constraints for Rank Modulation with
Linear Programming Decoding and its Application to Compact Graphs}

\author{
Manabu Hagiwara\\
Advanced Industrial Science and Technology\\
Central 2, 1-1-1 Umezono, Tsukuba City,\\
Ibaraki, 305-8568, JAPAN\\
Department of Mathematics, University of Hawaii,\\
2565 McCarthy Mall, Honolulu, HI, 96822, USA\\
}


\date{\empty}

\maketitle

\begin{abstract}
Linear constraints for a matrix polytope
 with no fractional vertex
 are investigated as intersecting research among permutation codes,
 rank modulations, and linear programming methods.
By focusing the discussion to the block structure of matrices,
new classes of such polytopes are obtained
from known small polytopes.
This concept,
called ``consolidation'',
 is applied to find a new compact graph
which is known as an approach for the graph isomorphism problem.
Encoding and decoding algorithms
for our new permutation codes
are obtained from existing algorithms for small polytopes.
The minimum distances associated with Kendall-tau distance and the minimum Euclidean distance
of a code obtained by changing the basis of a permutation code may
be larger than the original one.
\end{abstract}


\section{Introduction}
Permutation codes have been proposed
 for the purpose of digital modulation schemes \cite{Slepian}.
Formally speaking,
a permutation code $(G, \mu)$, or its code space, is an orbit $\{ X \mu \mid X \in G \}$,
where $G$ is  a set of permutation matrices\footnote{In some references, $G$ is chosen as a generalized permutation group, e.g., a signed permutation group.}
 and $\mu$ is a Euclidean vector.
The main goal of permutation code is,
``for a given Euclidean vector $\lambda$,
to find an orbit $X \mu$ which minimizes a distance
$ || X \mu - \lambda ||$ over $X \in G$ by an efficient algorithm.''
This problem can be computationally hard if the cardinality of $G$ is huge.

In recent years, study of permutation codes has been one of 
the most exciting research topics
in coding theory.
In 2008, Jiang \textit{et.al.} discovered remarkable
 application of permutation code for flash memory coding \cite{Jiang}.
In 2010, Barg investigated permutation codes and their error-correction
for rank modulation \cite{Barg:codesInPermutationsAndErrorCorrectionForRankModulation}.
IBM researchers, Papandreou \textit{et.al.},
 reported implementation of permutation codes as drift-tolerant
 multilevel phase-change memory
 \cite{Papandreou:driftTolerantMultilevelPhaseChangeMemory}.

Wadayama discovered a novel approach for error-correction of permutation codes
by using a linear programing method
 \cite{Wadayama:LPDecodablePermutationCodesBasedOnLinearlyConstrainedPermutationMatrices}.
He considered the following problem\footnote{
The original problem is
to maximize
$\mathrm{Trace}( \mu \lambda^T  X )$.
It is directly obtained that
$
\mathrm{Trace}( \mu \lambda^T  X )
= \mathrm{Trace}( \lambda^T  X \mu )
= \lambda^T  X \mu
$
}:
\[
\text{maximize } 
\lambda^T  X \mu,
\text{ for fixed Euclidean vectors }
\mu, \lambda
\]
where $X$ is taken over the Birkhoff polytope,
 which consists of doubly stochastic matrices,
or a subset of that polytope.
He showed the fundamental theorem that
 if a doubly stochastic matrix $X_0$ maximizes
 the problem above, then the matrix $X_0$ minimizes the linear programing problem below and vise versa
$$
\text{minimize }
|| X \mu - \lambda ||,
\text{ for a fixed Euclidean vectors }
\mu, \lambda
$$
where $X$ is taken over the Birkhoff polytope
and the distance $|| \cdot ||$ is the Euclidean distance.
The set of vertices of the Birkhoff polytope is equal to the set of permutation matrices.
In other words,
Wadayama's problem is equivalent to the permutation code problem,
in the form of a linear programming (LP) problem.
It implies that we can apply techniques of LP for decoding if $G$ is the set of permutation matrices.
Some readers may have the question:
``Can we apply this approach to a subset of permutation matrices?''
The answer is yes.
In 
 \cite{Wadayama:LPDecodablePermutationCodesBasedOnLinearlyConstrainedPermutationMatrices},
some new classes of permutation codes were proposed
by considering sub-polytopes of the Birkhoff polytope.
However, there is still a problem.
The new classes contain fractional vertices,
in other words, a sub-polytope may contain vertices which are not permutation matrices.
Hence, 
to find a method that yields linear constraints with no fractional points
would be
a meaningful contribution to the intersecting of 
research among permutation codes, rank modulation, and linear programming.

In this paper,
we present a novel technique to construct linear constraints
that have no fractional vertices
by introducing a structure called ``consolidation''.
This technique allows us to focus the discussions on code size, encoding algorithm
and decoding algorithm to local structures.

\section{Obtained ML-Certificate Permutation Codes}
As our contribution,
we obtain permutation codes that are decodable by using a linear programming method,
 have no fractional vertex
and are constructed from technique of this paper.

Throughout this paper,
 $X_{i,j}$ denotes the $(i,j)$th entry of a matrix $X$.
The index of matrices and vectors will start with not $1$ but $0$;
for example, $(v_0, v_1, v_2)$ denotes a three-dimensional vector.
The set of real numbers will be denoted by $\mathbb{R}$
and the set of $n$-by-$n$ matrices over $\mathbb{R}$ shall be denoted
by $\mathrm{M}_n (\mathbb{R})$.

\subsection{Wreath Product}
In this paper, we embed the permutation group $S_{n}$ on $\{0,1,\dots, n-1\}$
 into the set $\mathrm{M}_{n}(\mathbb{R})$ of matrices by the following manner:
for a permutation $\sigma$ in $S_n$,
we define an $n$-by-$n$ matrix $X^{\sigma}$ by 
$ X^{\sigma}_{i,j} := \delta_{j=\sigma(i)},$
where $\delta$ is the Kronecker delta.

Let us recall a wreath product.
\begin{definition}[Wreath Product]
\label{hagiwaraPermutationCodes2011:definition:wreathProduct}
Let $G$ be a set of $\nu$-by-$\nu$ matrices
 and $S_R$ the symmetric group on $\{ 0, 1, \dots, R-1 \}$.
For $g_0, g_1, \dots, g_{R-1} \in G$ and $\sigma \in S_R$,
define an $(\nu R)$-by-$(\nu R)$ permutation matrix $X := (X_{ij})$ by
\begin{equation*}
X_{ij} := \left \{
\begin{array}{ll}
g_i & \text{if } i = \sigma(j), \\
\mathbf{0} & \text{otherwise,}
\end{array}
\right.
\end{equation*} 
for $0 \le i,j < R$.
$X$ shall be denoted by $(\sigma | g_0, g_1, \dots, g_{R-1})$.
The collection of permutation matrices $(\sigma | g_0, g_1, \dots, g_{R-1})$
 is said to be a \textbf{wreath product} of $G$ and $S_R$ and is denoted by $G \wr S_R$.
\end{definition}

\begin{example}\label{compactGraphCopyTheory:example:8matrices}
Let $G$ be a permutation group on $\{0, 1, \dots, \nu-1\}$
and $R:=3$.
Then $G \wr S_3$ consists of matrices:
\begin{eqnarray*}
\left(
\begin{array}{ccc}
g_0 & \mathbf{0} & \mathbf{0} \\
\mathbf{0} & g_1 & \mathbf{0} \\
\mathbf{0} & \mathbf{0} & g_2 \\
\end{array}
\right),
\left(
\begin{array}{ccc}
g_0 & \mathbf{0} & \mathbf{0} \\
\mathbf{0} & \mathbf{0} & g_1 \\
\mathbf{0} & g_2 & \mathbf{0} \\
\end{array}
\right),
\left(
\begin{array}{ccc}
\mathbf{0} & g_0 & \mathbf{0} \\
g_1 & \mathbf{0} & \mathbf{0}\\
\mathbf{0} & \mathbf{0} & g_2 \\
\end{array}
\right),
\end{eqnarray*}
\begin{eqnarray*}
\left(
\begin{array}{ccc}
\mathbf{0} & g_0 & \mathbf{0} \\
\mathbf{0} & \mathbf{0} & g_1 \\
g_2 & \mathbf{0} & \mathbf{0} \\
\end{array}
\right),
\left(
\begin{array}{ccc}
\mathbf{0} & \mathbf{0} & g_0 \\
g_1 & \mathbf{0} & \mathbf{0} \\
\mathbf{0} & g_2 & \mathbf{0} \\
\end{array}
\right),
\left(
\begin{array}{ccc}
\mathbf{0} & \mathbf{0} & g_0 \\
\mathbf{0} & g_1 & \mathbf{0} \\
g_2 & \mathbf{0} & \mathbf{0} \\
\end{array}
\right),
\end{eqnarray*}
where $g_0, g_1, g_2 \in G$.
\qed
\end{example}

\subsection{ML-Certificate LP-Decodable Permutation Codes}
Let $C_n$ denote a cyclic group of order $n$,
$D_{2n}$ a dihedral group of order $2n$,
and $S_n$ a symmetric group of order $n!$.
We consider the groups $C_n, D_{2n}$ and $S_n$ are
sets of permutation matrices of size $n$-by-$n$.

As is mentioned in introduction,
we define a \textbf{permutation code} $(G, \mu)$
 as a pair of a set $G$ of permutation matrices
and a vector $\mu$.
Our argument in this paper does not rely on a choice of $\mu$.
Thus we focus the explanation on which $G$ is obtained by our construction.

\begin{example}
Let $\nu$ and $R$ be positive integers such that $\nu \neq 2, 4$
and $R \ge 2$.
Define $n := \nu R$.

For each $0 \le r < R$,
define a set $G_r$ of $\nu$-by-$\nu$ permutation matrices
 as one of $C_\nu$, $D_{2 \nu}$, and $S_{\nu}$,
and define a set $G_R$ of $R$-by-$R$ permutation matrices
 as one of $C_R$, $D_{2 R}$, and $S_{R}$.
Then we can construct the following set $G$ of 
$\nu R$-by-$\nu R$ permutation matrices:
\[
G :=
\{
(g_R | g_0, g_1, \dots, g_{R-1})
\mid
g_i \in G_i,
 0 \le i < R
\}.
\]
Let $c, d$ and $s$ denote 
the number of
 $C_\nu$, $D_{2 \nu}$, and $S_\nu$
that are chosen for $G_i$ ($0 \le i < R$) respectively.
Similarly,
define $c_R, d_R$ and $s_R$ 
to be
the number of
 $C_R$, $D_{2 R}$, and $S_R$
that are chosen for $G_R$ respectively.
Hence only one of $c_R, d_R, s_R$ is 1 and the others are $0$.
By using this notation,
the cardinality of $G$ is
\[
 \nu^{c} (2 \nu)^{d} (\nu !)^s 
 R^{c_R} (2 R)^{d_R} (R!)^{s_R}.
\]

Previously known examples are
choices
\[(c,d,s,c_R,d_R,s_R)
=(0,2,0,0,0,1),
(0,0,2,0,0,1).
\]
\end{example}

\begin{example}
Let $R$ be a positive integer with $R \ge 2$
and $\nu =2$.
Define $n := 2 R$.
We explain an example of our obtained set of permutation matrices
of size $n$-by-$n$.

For each $0 \le r < R$,
define $G_r$ to be 
either $C_2$ or the unit group,
 consists of only the identity matrix,
and define $G_R$ as one of $C_R$, $D_{2 R}$, and $S_{R}$.
Then we can construct the following set $G$ of permutation matrices:
\[
G :=
\{
(g_R | g_0, g_1, \dots, g_{R-1})
\mid
g_i \in G_i,
 0 \le i < R
\}.
\]
Let $c$ and $u$ denote 
the number of
 $C_2$'s and the unit groups
that
are chosen for $G_i$ ($0 \le i < R$) respectively.
Similarly,
let $c_R, d_R$ and $s_R$ 
denote 
the number of times
 $C_R$, $D_{2 R}$, and $S_R$
are chosen for $G_R$ respectively.
Hence only one of $c_R, d_R, s_R$ is 1 and the others are $0$.
By using this notation,
the cardinality of $G$ is
\[
 2^{c} 
 R^{c_R} (2 R)^{d_R} (R!)^{s_R}.
\]

For ``$c=R$ and $s_R = 1$'',
$G$ becomes a group and is isomorphic to a signed permutation group
 \textit{``as a group''}
whose permutation code has been investigated in
\cite{Slepian,Peterson:reflectionGroupCodesAndTheirDecoding}.
\end{example}

\begin{example}
Let $R$ be positive integers with $R \ge 2$
and $\nu = 4$.
Define $n := 4 R$.
We explain an example of our obtained set of permutation matrices
of size $n$-by-$n$.
Let $P_4$ denote the set of permutations, which are known as
pure involutions in $S_4$ \cite{Wadayama:LPDecodablePermutationCodesBasedOnLinearlyConstrainedPermutationMatrices}.
Then $P_4$ consists of three elements.

For each $0 \le r < R$,
define $G_r$ as one of $C_4$, $D_{8}$, $S_{4}$ and
$P_4$,
and define $G_R$ as one of $C_R$, $D_{2 R}$, and $S_{R}$.
Then we can construct the following set $G$ of permutation matrices:
\[
G :=
\{
(g_R | g_0, g_1, \dots, g_{R-1})
\mid
g_i \in G_i,
 0 \le i < R
\}.
\]
Let $c, d, s$ and $p$ denote 
the number of times
$C_4$, $D_{8}$, $S_4$
and $P_4$
are chosen for $G_i$ ($0 \le i < R$) respectively.
Similarly,
define $c_R, d_R$ and $s_R$ 
to be the number
of times
$C_R$, $D_{2 R}$, and $S_R$
are chosen for $G_R$ respectively.
Hence only one of $c_R, d_R, s_R$ is 1 and the others are $0$.
By using this notation,
the cardinality of $G$ is
\[
 4^{c} 8^{d} 24^s 3^p
 R^{c_R} (2 R)^{d_R} (R!)^{s_R}.
\]
\end{example}

\section{Compactness}
\subsection{Compact Constraints}
\label{hagiwaraPermutationCdoes2012:subsection:CompactConstraints}
\begin{definition}[Linear Constraints]
A \textbf{linear constraint} $l(X)$ for an $n$-by-$n$ matrix
is defined as either
a linear equation on entries of a matrix or a linear inequality on entries of a matrix.

Formally speaking, by regarding an entry $X_{i,j}$ as a variable ($0 \le i,j < n$),
we state either
\[ l(X) : \sum_{0 \le i,j < n} c_{i,j} X_{i,j} = c_0, \]
or
\[ l(X) : \sum_{0 \le i,j < n} c_{i,j} X_{i,j} \ge c_0, \]
for some $c_0, c_{i,j} \in \mathbb{R}$.
The relation $=$ or $\ge$ is uniquely determined by $l(X)$.
In stead of the symbols $=$ and $\ge$,
we use $\trianglerighteq_l$ (or simply $\trianglerighteq$),
e.g.,
\[ l(X) : \sum_{0 \le i,j < n} c_{i,j} X_{i,j} \trianglerighteq_l c_0. \]

If we do not need to clarify the variable $X$ of a linear constant $l(X)$,
we denote it simply by $l$.
\end{definition}

\begin{definition}[Satisfy, $\models$]
Let $\mathcal{L}$ be a set of linear constraints
 for an $n$-by-$n$ matrix.

For $l \in \mathcal{L}$ and $X \in \mathrm{M}_2 (\mathbb{R})$,
if a matrix $X$ satisfies $l$,
we write $X \models l$.
If $X \models l$ for every $l \in \mathcal{L}$,
we write $X \models \mathcal{L}$.
\end{definition}

\begin{note}
\label{hagiwaraPermutationCodes2012:note:doublyStochasticMatrix}
Let $\mathcal{L}_D$ denote the following set of linear constrains defined as
\begin{eqnarray*}
\mathcal{L}_D &:=&
 \{\text{row-sum constraints}\} \\
& & \cup \{\text{column-sum constraints}\} \\
& & \cup \{\text{positivity}\}.
\end{eqnarray*}
Then $\mathcal{L}_D$ is a doubly stochastic constraint.
For clarifying the size $n$ of a matrix,
we may denote $\mathcal{L}_{D}$ by $\mathcal{L}_{D^{(n)} }$.

Let us define a set $\mathrm{DSM}_n$ as
\[
\mathrm{DSM}_n
 :=
\{ X \in \mathrm{M}_n (\mathbb{R}) \mid X \models \mathcal{L}_D^{(n)} \}.
\]
An element of $\mathrm{DSM}_n$ is said to be a \textbf{doubly stochastic matrix}.

The symbols $\mathcal{L}_D$ and $\mathrm{DSM}_n$ are used throughout this paper.
\end{note}

\begin{definition}[Doubly Stochastic Constraint]
A \textbf{doubly stochastic constraint} $\mathcal{L}$ for an $n$-by-$n$ matrix is a set of linear constraints
such that
$X \models \mathcal{L}$ implies $X \models \mathcal{L}_D$.
\end{definition}

\begin{remark}
Since $X^{\sigma} \models \mathcal{L}_D$ in Note \ref{hagiwaraPermutationCodes2012:note:doublyStochasticMatrix}
for a permutation $\sigma$,
we have
\[ S_n \subset \mathrm{DSM}_n. \]
\end{remark}

\begin{definition}[Doubly Stochastic Polytope]
Let $\mathcal{L}$ be a doubly stochastic constraint
 for an $n$-by-$n$ matrix.

The collection of $n$-by-$n$ matrices which satisfy all of linear constraints in $\mathcal{L}$
is denoted by $\mathcal{D}_n [\mathcal{L}]$.
We call $\mathcal{D}_n [ \mathcal{L}]$ a \textbf{doubly stochastic polytope of} $\mathcal{L}$.

For $\mathcal{D} \subset \mathrm{M}_n ( \mathbb{R} )$,
$\mathcal{D}$ is said to be a \textbf{doubly stochastic polytope}
if there exists a doubly stochastic constraint $\mathcal{L}$ such that
$\mathcal{D} = \mathcal{D}_n [ \mathcal{L} ]$.
\end{definition}

\begin{example}[Birkhoff Polytope]
\label{hagiwaraPermutationCodes2012:example:BirkhoffPolytope}
We use the notation $\mathrm{DSM}_n$, instead of $\mathcal{D}_n [\mathcal{L}_D]$
for the doubly stochastic constraint $\mathcal{L}_D$ in Note \ref{hagiwaraPermutationCodes2012:note:doublyStochasticMatrix}.

The polytope $\mathrm{DSM}_n$ is said to be a \textbf{Birkhoff polytope}.
\end{example}

A Birkhoff polytope $\mathrm{DSM}_n$ is an example of doubly stochastic polytope.
Note that any doubly stochastic polytope is a subset of $\mathrm{DSM}_n$.

\begin{definition}[Vertex]
Let $\mathcal{D}$ be a doubly stochastic polytope.

An element $X \in \mathcal{D}$ is said to be a \textbf{vertex}
if there are neither elements $X_0, X_1 \in \mathcal{D}$ with $X_0 \neq X_1$ nor positive numbers $c_0, c_1 \in \mathbb{R}$
such that
\[
 X = c_0 X_0 + c_1 X_1.
\]
We denote the set of vertices for $\mathcal{D}$ by $\mathrm{Ver}( \mathcal{D} )$.
\end{definition}

\begin{definition}[LP-decodable permutation code]
We call a permutation code $(G, \mu)$ 
\textbf{an LP (Linear Programing)-decodable permutation code}
if there exists a doubly stochastic constraint $\mathcal{L}$
such that 
$G = G_\mathcal{L}$,
where $G_\mathcal{L} := \mathrm{Ver}( \mathcal{D}_n [ \mathcal{L} ]) 
\cap S_n$.
\end{definition}
Let us consider the following algorithm
as an error-correcting decoding algorithm for LP-decodable permutation codes.
\begin{definition}[Error-Correcting Decoding Algorithm]
We define an error-correcting decoding algorithm as follows:
\begin{itemize}
\item \textbf{Input:} vectors $\mu, \lambda$, and a set $\mathcal{L}$ of linear constraint,
\item \textbf{Output:} a vector $\lambda_0$,
\item[1.] Solve the following linear programming problem:
\[ \mathrm{max}_{X \models \mathcal{L} } \lambda^T X \mu. \]
\item[2.] For a solution $X_0$, set $\mu_0 := X_0 \mu$.
\item[3.] Output $\mu_0$.
\end{itemize}
\end{definition}

\begin{remark}
It is important that a solution is a vertex of $\mathcal{D}_n [ \mathcal{L} ]$
if the solution exists uniquely.
Therefore we prefer $\mathcal{L}$ such that
 $\mathrm{Ver}( \mathcal{D}_n [ \mathcal{L} ]) \subset S_n$
for permutation codes $( G_\mathcal{L} , \mu )$.
\end{remark}

\begin{definition}[Compact Constraint]
Let $\mathcal{L}$ be a doubly stochastic constraint
 for an $n$-by-$n$ matrix.

We call $\mathcal{L}$ a \textbf{compact constraint}
if 
\begin{itemize}
\item $\mathcal{L}$ consists of a finite number of linear constraints,
\item the doubly stochastic polytope $\mathcal{D}_n [ \mathcal{L} ]$ is a bounded set,
\item the vertex set
satisfies $\mathrm{Ver}( \mathcal{D}_n [ \mathcal{L} ] ) \subset S_n$.
\end{itemize}
\end{definition}

Our primary interest is to find a new class of compact constraints.
To the best of the author's knowledge,
not many compact constraint have been found.
From here, we introduce seven examples of compact constraints.
Two of them are given below and
the others are in Sec \ref{compactGraphCopyTheory:Section:ExampleOfCompactSeedGraph}.

\begin{theorem}[Birkhoff von-Neuman Theorem]
\label{compactGraphCopyTheory:theorem:birkhoffVon-Neumann}
\label{hagiwaraPermutationCodes2012:BirkhoffVonNeumannTheorem}
\[
\mathrm{Ver}(  \mathrm{DSM}_n  ) = S_n.
\]
\end{theorem}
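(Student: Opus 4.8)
The plan is to establish the two inclusions $S_n \subseteq \mathrm{Ver}(\mathrm{DSM}_n)$ and $\mathrm{Ver}(\mathrm{DSM}_n) \subseteq S_n$ separately. Before starting I would record a normalization: if $X = c_0 X_0 + c_1 X_1$ with $X, X_0, X_1 \in \mathrm{DSM}_n$ and $c_0, c_1 > 0$, then summing across any single row gives $1 = c_0 + c_1$, so the coefficients in the vertex definition are automatically convex. This lets me treat ``vertex'' as ``not a proper convex combination of two distinct doubly stochastic matrices'' throughout.

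For $S_n \subseteq \mathrm{Ver}(\mathrm{DSM}_n)$, I would fix a permutation $\sigma$ and suppose $X^{\sigma} = c_0 X_0 + c_1 X_1$ as above with $X_0 \neq X_1$. At every position $(i,j)$ where $X^{\sigma}_{i,j} = 0$, nonnegativity of $X_0, X_1$ together with $c_0, c_1 > 0$ forces $(X_0)_{i,j} = (X_1)_{i,j} = 0$. Hence both $X_0$ and $X_1$ are supported on the single-entry-per-row pattern of $\sigma$, and the row-sum constraints then force the surviving entries to equal $1$, so $X_0 = X_1 = X^{\sigma}$, a contradiction. This direction is routine.

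The substantive direction $\mathrm{Ver}(\mathrm{DSM}_n) \subseteq S_n$ I would prove contrapositively: a doubly stochastic matrix that is not a permutation matrix is not a vertex. Such an $X$ must have an entry in $(0,1)$, since a $0/1$ doubly stochastic matrix is a permutation matrix. Form the bipartite graph $H$ on the row indices and column indices with an edge $\{i,j\}$ whenever $0 < X_{i,j} < 1$. The key structural claim is that every non-isolated vertex of $H$ has degree at least two: if row $i$ had exactly one fractional entry and all others in $\{0,1\}$, those others would sum to an integer, yet the total row sum is $1$, forcing the single fractional entry to be an integer, which is impossible; the same argument applies to columns. A finite graph whose non-isolated vertices all have degree at least two contains a cycle, and since $H$ is bipartite this cycle has even length.

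Finally I would perturb along that even cycle. Let $E$ be the matrix carrying $+1$ and $-1$ alternately on the cycle edges and $0$ elsewhere; because consecutive cycle edges meet at a shared row or column vertex and receive opposite signs, every row sum and every column sum of $E$ vanishes. For $\epsilon > 0$ smaller than the distance from the relevant entries of $X$ to $\{0,1\}$, both $X \pm \epsilon E$ keep their entries in $[0,1]$ and inherit all row- and column-sum constraints, hence lie in $\mathrm{DSM}_n$, and they are distinct since $E \neq 0$. Then $X = \tfrac{1}{2}(X + \epsilon E) + \tfrac{1}{2}(X - \epsilon E)$ is a proper convex combination, so $X$ is not a vertex. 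I expect the main obstacle to be the cycle-existence step, that is, justifying that the fractional support forces degree at least two at each active row and column so that a cycle must appear; once that is in hand the perturbation is immediate. A reader who prefers could instead replace this combinatorial core by invoking Hall's marriage theorem to extract a permutation supported on the positive entries of $X$ and then induct on the number of nonzero entries, but the cycle argument keeps the proof self-contained.
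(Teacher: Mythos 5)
Your proof is correct, but note that the paper itself offers no proof to compare against: it states this result as the classical Birkhoff--von Neumann theorem and simply cites/uses it (the only thing derived from it in the text is the immediate consequence that $\mathcal{L}_D$ is compact). What you have written is a complete, self-contained proof via the standard cycle-perturbation argument: the normalization $c_0+c_1=1$ correctly reconciles the paper's definition of vertex (which allows arbitrary positive $c_0,c_1$) with convex combinations; the easy inclusion $S_n \subseteq \mathrm{Ver}(\mathrm{DSM}_n)$ via support constraints is sound; and the core argument --- every row or column containing a fractional entry contains at least two, hence the fractional-support bipartite graph has minimum degree two on its non-isolated vertices and so contains an (even) cycle, along which an alternating $\pm\epsilon$ perturbation stays inside $\mathrm{DSM}_n$ and exhibits $X$ as a proper convex combination --- is the classical argument and is carried out without gaps. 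Your closing remark is also accurate: the alternative proof extracts a permutation from the positive support via Hall's marriage theorem and inducts on the number of nonzero entries; that version is shorter if one is willing to import Hall's theorem, whereas your cycle argument is fully elementary. Either would serve as a legitimate appendix proof for a theorem the paper leaves unproved.
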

Thus $\mathcal{L}_D$ is compact.

\begin{example}[Pure Involution]
\label{hagiwaraPermutationCodes2012:expl:pureInv}
The following linear constraint $\mathcal{L}_P$ is introduced in
\cite{Wadayama:LPDecodablePermutationCodesBasedOnLinearlyConstrainedPermutationMatrices}:
$\mathcal{L}_P := \mathcal{L}_{D^{(n)}} \cup \{ \sum_{0 \le h < n} X_{hh} = 0,
 X_{ij} - X_{ji} = 0 \text{ for any } 0 \le i,j < n \}.$
It is known that $\mathcal{L}_{P}$ is compact for $n = 2, 4$ but not for $n \ge 6$.
\end{example}

\subsection{Compact Graph}
\label{compactGraphCopyTheory:section:BirkhoffVonNeumannTheorem}
\label{compactGraphCopyTheory:section:graphAutomorphismAndAMultiCopiedUnionGraph}

The notion of compact graph has been introduced 
for the study of the graph isomorphism problem,
 a famous problem in computer science.
Even though the motivation of the study of compact graph seems far from error-correcting codes,
we apply it to permutation codes.

Let $\Gamma := (\{ 0, 1,\dots, n-1 \}, E)$ be a connected graph with its vertex set $\{ 1, 2, \dots n \}$
and its edge set $E$.
Recall that $E \subset \{ 0, 1,\dots, n-1 \}^2$.
Note that, in this paper, $\Gamma$ may be a directed graph or an undirected graph.
Let $A^{\Gamma}$ be the \textbf{adjacency matrix} of $\Gamma$,
\textit{i.e.},
$A^{\Gamma} $ is an $n$-by-$n$ zero-one-matrix over $\mathbb{R}$
and
its $(i,j)$-entry $A^{\Gamma}_{i,j}$ is
\begin{equation*}
A^{\Gamma}_{i,j} := \left \{
\begin{array}{ll}
1 & \text{if }(i,j) \in E, \\
0 & \text{otherwise}
\end{array}
\right.
\end{equation*} 
\begin{definition}[$\mathcal{L}_\Gamma$]
\label{hagiwaraPermutationCodes2012:defn:graphConstraint}
For a graph $\Gamma=( \{ 0, 1, \dots, n-1 \}, E)$,
 we define a \textbf{doubly stochastic constraint}
 $\mathcal{L}_{\Gamma}$ by
\[
\mathcal{L}_{\Gamma}
:=
\mathcal{L}_D
\cup 
\{
 X A^{\Gamma} = A^{\Gamma} X
\}.
\]
Note that $X A^{\Gamma} = A^{\Gamma} X$ defines $n^2$-linear equations
by regarding each entry as an equation.
\end{definition}

\begin{figure}[htbp]
 \includegraphics[width=85mm]{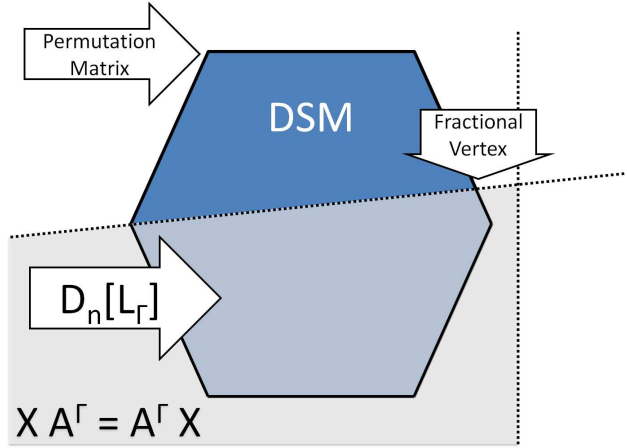}
 \caption{$\mathrm{DSM}$ and $\mathcal{D}_n [ \mathcal{L}_{\Gamma}]$: 
we like to avoid fractional vertices due to the additional equations
$X A^\Gamma = A^\Gamma X$.}
 \label{compactGraphCopyTheory:dsmAndPGamma.eps}
\end{figure}

For a permutation $\sigma$ and a graph $\Gamma$ with $n$-vertices,
we define a graph $\sigma ( \Gamma )$ 
as a graph associated with an adjacency matrix
$X^{\sigma} A_{\Gamma} (X^{\sigma})^{-1}$,
where $X^{\sigma}$ is a permutation matrix associated with $\sigma$
and $A_{\Gamma}$ is the adjacency matrix of $\Gamma$.

\begin{definition}[Automorphism]
Let $\sigma$ be a permutation 
and $\Gamma$ a graph.
Let $X^{\sigma}$ be the permutation matrix associated with $\sigma$.

The permutation $\sigma$ is called an \textbf{automorphism} of $\Gamma$
if
$\sigma (\Gamma) = \Gamma $ holds,
equivalently,
 $X^{\sigma} A^{\Gamma} (X^{\sigma})^{-1} = A^{\Gamma}$ holds,
where $A^{\Gamma}$ is the adjacency matrix of $\Gamma$.
Let $\mathrm{Aut}(\Gamma)$ denote the set of
automorphisms of $\Gamma$.
\end{definition}
It is easy to verify that
\[
 X \in \mathrm{Aut}(\Gamma)
 \iff
 X A^{\Gamma} = A^{\Gamma} X
\]
for a permutation matrix $X$.
Therefore $\mathcal{D}_n [ \mathcal{L}^{\Gamma}]\supset \mathrm{Aut}(\Gamma)$ holds.
By this inclusion
 and Birkhoff von-Neumann theorem (Theorem \ref{compactGraphCopyTheory:theorem:birkhoffVon-Neumann}), 
we have
\[
 \mathrm{Ver} ( \mathcal{D}_n [ \mathcal{L}_{\Gamma}] ) \supset \mathrm{Aut} ( \Gamma )
\]
 for any graph $\Gamma$.

Hence the following are equivalent:
\begin{eqnarray}
 \mathrm{Ver}( \mathcal{D}_n [ \mathcal{L}_{\Gamma}] ) &=& \mathrm{Aut}( \Gamma ), \nonumber \\
 \mathrm{Ver}( \mathcal{D}_n [ \mathcal{L}_{\Gamma}] ) &\subset& \mathrm{Aut}( \Gamma ), \nonumber \\
 \mathrm{Ver}( \mathcal{D}_n [ \mathcal{L}_{\Gamma}] ) &\subset& S_n. \label{hagiwaraPerm2012:eqn:graphAut}
\end{eqnarray}
We present some examples of graphs for which the equality above holds
in Sec.\ref{compactGraphCopyTheory:Section:ExampleOfCompactSeedGraph}.
\begin{definition}[Compact Graph]
Let $\Gamma$ be a (directed or un-directed) graph.
In this paper,
$\Gamma$ is called \textbf{compact}
if
\[
\mathrm{Ver}( \mathcal{D}_n [ \mathcal{L}_{\Gamma}] ) = \mathrm{Aut}( \Gamma ).
\]
\end{definition}

\begin{remark}
The notion of compact graph is introduced by Tinhofer
 \cite{Tinhofer:graphIsomorphismAndTheoremsOfBirkhoffType}.
The original definition of a compact graph is restricted to ``un-directed graphs.''
The idea to allow us to use directed graphs
 is ours.
In Sec.\ref{compactGraphCopyTheory:Section:ExampleOfCompactSeedGraph},
 we shall obtain a new class of directed compact graphs,
 called ``cycles''.
\end{remark}

\subsection{Examples of Compact Graphs}
\label{compactGraphCopyTheory:Section:ExampleOfCompactSeedGraph}

Tinhofer showed that ``any connected tree and any cycle are
compact graphs'' \cite{Tinhofer:graphIsomorphismAndTheoremsOfBirkhoffType}
and
``a union of the same two connected un-directed graph is compact''
in  \cite{Tinhofer:aNoteOnCompactGraphs} (see Remark \ref{compactGraphCopyTheory:remark:doubleCopyTheorem}).
On the other hand, Schreck showed that ``a compact regular graph with prime vertices
must be a circulant graph'' \cite{Schreck:aNoteOnCertainSubpolytopesOfTheAssignmentPolytopeAssociatedWithCirculantGraphs}.
Therefore, it does not seem easy to obtain.

\begin{example}[Complete Graph]
Let $\Gamma = (\{ 0, 1,\dots, n-1 \}, E)$ be a \textit{complete graph},
\textit{i.e.}, $E = \{ (i,j) \in \{ 0, 1, \dots, n-1 \}^2 \mid i \neq j \}$.
Then $\Gamma$ is compact.
Since $\Gamma$ is complete, $\mathrm{Aut}(\Gamma)$ is a symmetric group $S_n$,
 \textit{i.e.}, the set of permutation matrices.
\begin{figure}[htbp]
\begin{center}
 \includegraphics[width=55mm]{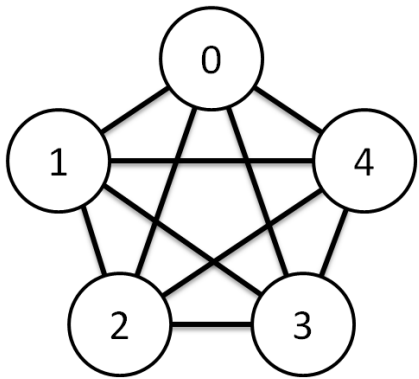}
 \caption{Complete Graph of $n=5$}
\end{center}
\end{figure}
\end{example}

\begin{example}[Tree]
A graph is said to be a \textbf{tree}
if the graph is connected and has no cycle and no loop.
A tree is known as to be compact graph
\cite{Tinhofer:graphIsomorphismAndTheoremsOfBirkhoffType}.
\end{example}

The next examples are examples of trees.
They give automorphism groups which have been investigated
 in \cite{Slepian} and \cite{Peterson:reflectionGroupCodesAndTheirDecoding} under other decoding algorithms.

\begin{example}[Line and Televis\footnote{Televis: a toy consists of two balls and a string which connects the balls}]
\label{hagiwaraPerm2012:expl:lineTelevis}
Let $n$ be a positive integer and $E:=\{ (i,j) \mid i-j = \pm 1  \}$.
Since $\Gamma := ( \{ 0, 1, \dots, n-1 \}, E)$ is a tree,
it is compact.
Then $\mathrm{Aut}( \Gamma )$ is isomorphic to a cyclic group $C_2$ of order $2$.
We call this graph a \textbf{line}.
If $n=2$, we call the graph a \textbf{televis}.

\begin{figure}[htbp]
\begin{center}
 \includegraphics[width=55mm]{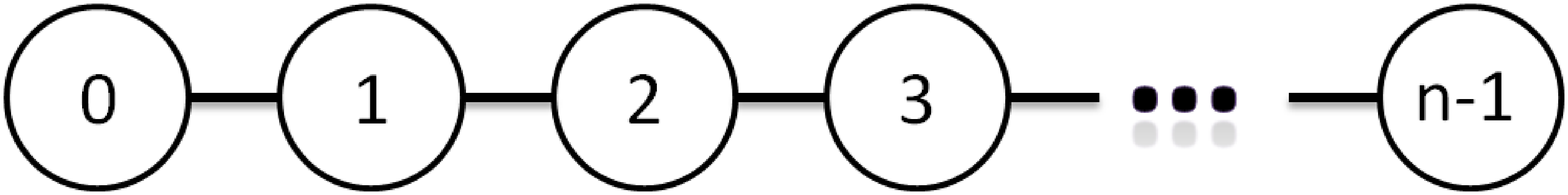}
 \caption{Graph of Type Line}
\end{center}
\end{figure}
\end{example}

\begin{example}[Circle]
Let $n$ be a positive integer and $E:=\{ (i,j) \mid i-j = \pm 1 \pmod{n} \}$.
Then $\Gamma := ( \{ 0, 1,\dots, n-1 \}, E)$ is compact \cite{Tinhofer:graphIsomorphismAndTheoremsOfBirkhoffType}.
The automorphism group $\mathrm{Aut}( \Gamma )$ is known as a dihedral group $\mathrm{D}_{2n}$\footnote{In some references, a dihedral group of degree $n$ is denoted by $D_n$.} of degree $n$,
 or equivalently known as a reflection group of type $I_{n}$ in Humphrey's book \cite{Humphreys:reflectionGroupsAndCoxeterGroups},
since $\# \mathrm{D}_{2n} = 2n$.

\begin{figure}[htbp]
\begin{center}
 \includegraphics[width=70mm]{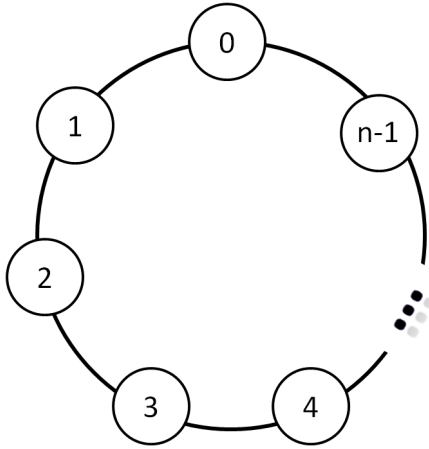}
 \caption{Graph of Type Circle}
\end{center}
\end{figure}
\end{example}

Thus for, we have presented examples of un-directed graphs that satisfy the condition for our main theorem.
From here on, we discuss a ``directed'' graph that satisfies it.

\begin{example}[Cycle]
Let $\Gamma = ( \{0, 1, \dots, n-1 \}, E)$ be a directed cyclic graph,
\textit{i.e.},
$E = \{ (0,1), (1,2), \dots, (n-2, n-1), (n-1,0) \}$.

Then $\Gamma$ is a compact graph.
$\mathrm{Aut}( \Gamma )$ is isomorphic to a cyclic group $C_n$ of order $n$.
\end{example}
The class ``cycle'' has not been known to be a compact graph.
We should present a proof here.
\begin{proof}
It is easy to determine the automorphism group $\mathrm{Aut}(\Gamma)$ for a cyclic graph $\Gamma$.
The group consists of $n$ cyclic permutation $c_0, c_1, \dots, c_{n-1}$,
where $c_v (i) = i+v \pmod{n} $.

Observe $\mathrm{Ver}( \mathcal{D}_n[\mathcal{L}_{\Gamma}] )$.
The equation $A^\Gamma X = X A^\Gamma$ is equivalent to
\[ X_{0,0+v\pmod{n}} = \dots = X_{n-1, n-1+v\pmod{n}},\]
for any $v \in \{0,1, \dots, n-1 \}$.
Therefore,
for any $X \in \mathrm{Ver}( \mathcal{D}_n[\mathcal{L}_{\Gamma}])$,
there exist constants $\lambda_0, \lambda_1, \dots, \lambda_{n-1} \in \mathbb{R}$ such that
\[
X = \sum_{v \in \{0, 1, \dots, n-1 \} } \lambda_v X^{c_{v}},
\]
where $X^{c_v}$ is a permutation matrix associated with a permutation $c_v$.
\begin{figure}[htbp]
\begin{center}
 \includegraphics[width=70mm]{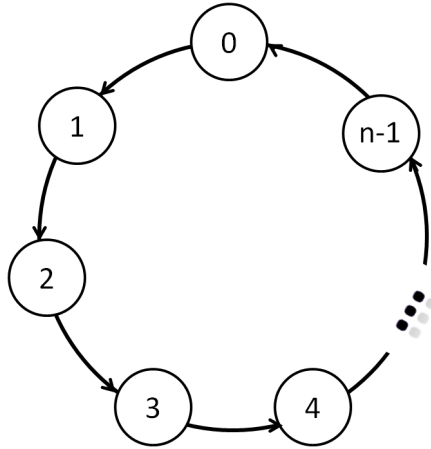}
 \caption{Graph of Type Cycle}
\end{center}
\end{figure}

If $X$ is not a cyclic permutation matrix, 
$X$ must be a linear combination of the cyclic permutation matrices.
Thus the vertices of $\mathcal{D}_n [\mathcal{L}_{\Gamma}]$ are precisely
the cyclic permutation matrices.
\end{proof}
\section{Consolidation}
\subsection{Merged Constraints and Holding Constraints}
\label{hagiwaraPerm2012:subsec:MergedConstaints}
The aim of this section is 
to introduce a novel technique to construct a compact constraint.
\begin{definition}[Homogeneous Constraints]
Let $l$ be a linear constraint for an $n$-by-$n$ matrix.
We call $l$ \textbf{homogeneous} if
the constant term of $l$ is $0$.

Formally speaking,
\[
 l(X) : \sum_{0 \le i,j < n} c_{i,j} X_{i,j} \trianglerighteq_l 0,
\]
for some $c_{i,j} \in \mathbb{R}$.
\end{definition}

\begin{example}
The ``positivity'' is homogeneous
but the ``row-sum constraint'' is not.
Linear constraints $X A^\Gamma = A^\Gamma X$ obtained
from a graph $\Gamma$ are homogeneous.
The linear constraint for pure involution is also homogeneous.
\end{example}

The following constraints are homogeneous too.
\begin{definition}[Weak Row-sum (Column-sum) Constraint]
We call the following $n$-linear constraints \textbf{weak row-sum constraints}:
\[
\sum_{0 \le j < n} X_{i_0, j} = \sum_{0 \le j < n} X_{0, j},
\text{ for any } 0 \le i_0 < n.
\]
Similarly, we call the following $n$-linear constraints \textbf{weak column-sum constraints}:
\[
\sum_{0 \le i < n} X_{i, j_0} = \sum_{0 \le i < n} X_{i, 0},
\text{ for any } 0 \le j_0 < n.
\]
\end{definition}

\begin{lemma}[Constant Sum Property]
\label{hagiwaraPermutationCodes2012:proposition:constantSumProperty}
If a matrix satisfies both the weak row-sum and the weak column-sum
constraints, then any two row-sums or column-sums are equal,
\end{lemma}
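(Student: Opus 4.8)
The plan is to show that there is a single common value shared by every row-sum and every column-sum, from which the assertion follows at once. First I would use the weak row-sum constraints, which state $\sum_{0 \le j < n} X_{i_0,j} = \sum_{0 \le j < n} X_{0,j}$ for every $i_0$, to conclude that all row-sums coincide; denote this common value by $r$. Symmetrically, the weak column-sum constraints give that all column-sums coincide; denote this common value by $c$. At this stage all row-sums agree with one another and all column-sums agree with one another, but it still remains to link $r$ and $c$, since the two families of constraints treat rows and columns independently.

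The key step is a double count of the total sum $S := \sum_{0 \le i,j < n} X_{i,j}$ of all entries. Summing the entries row by row gives $S = \sum_{0 \le i < n} (\sum_{0 \le j < n} X_{i,j}) = n r$, since each of the $n$ rows contributes $r$. Summing the same entries column by column gives $S = \sum_{0 \le j < n} (\sum_{0 \le i < n} X_{i,j}) = n c$, since each of the $n$ columns contributes $c$. Equating these two evaluations of the same quantity yields $n r = n c$, and hence $r = c$.

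Therefore every row-sum and every column-sum equals this one common value, so any two of them---whether both row-sums, both column-sums, or one of each---are equal, which is exactly the claim. I do not anticipate a genuine obstacle here; the only point worth flagging is that the weak constraints on their own force equality only \emph{within} the rows and \emph{within} the columns, and it is precisely the invariance of the total entry-sum $S$ under the two orders of summation that compels the row-value $r$ and the column-value $c$ to coincide.
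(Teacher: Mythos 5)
Your proof is correct and follows essentially the same route as the paper's: all row-sums agree by the weak row-sum constraints, all column-sums agree by the weak column-sum constraints, and the double count of the total entry-sum (the paper phrases this as every row- and column-sum equaling $\frac{1}{n}\sum_{0 \le i,j < n} X_{i,j}$) identifies the two common values. Your write-up merely makes the final double-counting step more explicit than the paper does.
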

\begin{proof}
By weak row-sum constraints,
any row-sums are equal to each other.
So are any column-sums.
Thus any row and column sum is equal to
$\frac{1}{n} \sum_{0 \le i,j < n} X_{i,j}$,
where $n$ is the size of the matrix $X$.
Hence the statement holds.
\end{proof}

\begin{definition}[Quasi-homogeneous Constraint]
A linear constraint $\mathcal{L}$ is said to be a \textbf{quasi-homogeneous constraint}
if
$\mathcal{L}$ consists of
homogeneous constraints, all row-sum constraints and all column-sum constraints.
\end{definition}

For example, $\mathcal{L}_D$ in Note \ref{hagiwaraPermutationCodes2012:note:doublyStochasticMatrix},
$\mathcal{L}_P$ in Expl. \ref{hagiwaraPermutationCodes2012:expl:pureInv}
 and $\mathcal{L}_\Gamma$ in Def. \ref{hagiwaraPermutationCodes2012:defn:graphConstraint}
 are quasi-homogeneous constraints.

\begin{definition}[Merged Constraint]
Let $\mathcal{L}$ be a quasi-homogeneous constraint.
For $\mathcal{L}$, we define another set $\mathcal{L}^\square$ of linear constraints
by replacing row-sum constraints in $\mathcal{L}$ with weak row-sum constraints
and by replacing column-sum constraints in $\mathcal{L}$ with weak column-sum constraints.
We call $\mathcal{L}^\square$ a \textbf{merged constraint} for $\mathcal{L}$.
\end{definition}

\begin{remark}
A merged constraint is homogeneous.
\end{remark}

\begin{example}
A merged constraint
$\mathcal{L}_D^{\square}$
for the constraints $\mathcal{L}_D$ in
 Note \ref{hagiwaraPermutationCodes2012:note:doublyStochasticMatrix}
 consists of three kinds of linear constraints:
\begin{itemize}
\item weak row-sum constraints,
\item weak column-sum constraints,
\item positivity.
\end{itemize}
\end{example}

Let $\nu$ and $R$ be positive integers.
For a $\nu R$-by-$\nu R$ matrix $X$,
we may divide $X$ into $R^2$ block matrices $X^{[r_0, r_1]}$ of size $\nu$-by-$\nu$
via the following relation:
\[
 X^{[r_0, r_1]}_{i,j}
=
 X_{r_0 \nu + i, r_1 \nu + j},
\]
for $0 \le i,j < \nu$ and $0 \le r_0, r_1 < R$.

For example, if $\nu=3$ and $R=2$, we have
\begin{eqnarray*}
& &
\left(
\begin{array}{cccccc}
X_{00} & X_{01} & X_{02} & X_{03} & X_{04} & X_{05}\\
X_{10} & X_{11} & X_{12} & X_{13} & X_{14} & X_{15}\\
X_{20} & X_{21} & X_{22} & X_{23} & X_{24} & X_{25}\\
X_{30} & X_{31} & X_{32} & X_{33} & X_{34} & X_{35}\\
X_{40} & X_{41} & X_{42} & X_{43} & X_{44} & X_{45}\\
X_{50} & X_{51} & X_{52} & X_{53} & X_{54} & X_{55}
\end{array}
\right)\\
&=&
\left(
\begin{array}{cc}
X^{[00]} & X^{[01]}\\
X^{[10]} & X^{[11]}
\end{array}
\right)
\end{eqnarray*}
and
\begin{eqnarray*}
X^{[01]}
=
\left(
\begin{array}{ccc}
X_{03} & X_{04} & X_{05} \\
X_{13} & X_{14} & X_{15} \\
X_{23} & X_{24} & X_{25}
\end{array}
\right).
\end{eqnarray*}
We call $X^{[r_0, r_1]}$ the \textbf{$(r_0,r_1)$th block} of $X$.

\begin{definition}[Holding Constraints]
Let $\mathcal{H}$ be a set of linear constraints
for an $R$-by-$R$ matrix.

For $h(H) \in \mathcal{H}$,
we define a linear constraint $h^{\#}(X)$
 for $\nu R$-by-$\nu R$ matrix
by replacing
$H_{r_0, r_1}$ with $\sum_{0 \le j < \nu} X_{0, j}^{[r_0, r_1]}$.
For $\mathcal{H}$,
we define a set $\mathcal{H}^{\#}$ of linear constraints
for $\nu R$-by-$\nu R$ matrix
as
\[
 \mathcal{H}^{\#} := \{ h^{\#} \mid h \in \mathcal{H} \}.
\]
We call $\mathcal{H}^{\#}$ a \textbf{holding constraint} associated with $\mathcal{H}$
of degree $\nu$.
\end{definition}

\begin{example}
Let us define
 $\mathcal{H} := \{
 h_1(H) : H_{00}+H_{01}=1,
 h_2(H) : H_{00}+H_{10}=1,
 h_3(H) : H_{11} \ge 0
 \}$.
The holding constraint $\mathcal{H}^{\#}$ of degree 3 is
\begin{eqnarray*}
\mathcal{H}^{\#} =\{
 & &\\
 & h_1^{\#}(X) :&
(X^{[00]}_{00}+X^{[00]}_{01}+X^{[00]}_{02}) \\
 & & +
(X^{[01]}_{00}+X^{[01]}_{01}+X^{[01]}_{02})
=1,\\
 & h_2^{\#}(X) :&
(X^{[00]}_{00}+X^{[00]}_{01}+X^{[00]}_{02}) \\
 & & +
(X^{[10]}_{00}+X^{[10]}_{01}+X^{[10]}_{02})
=1,\\
 & h_3^{\#}(X) :&
(X^{[11]}_{00}+X^{[11]}_{01}+X^{[11]}_{02})
\ge 0\\
\}. & &\\
\end{eqnarray*}
\end{example}

\subsection{Consolidation}

The following is a key idea of our construction.
For easy reading,
we recommend to read it with Figure \ref{hagiwaraPermutationCodes: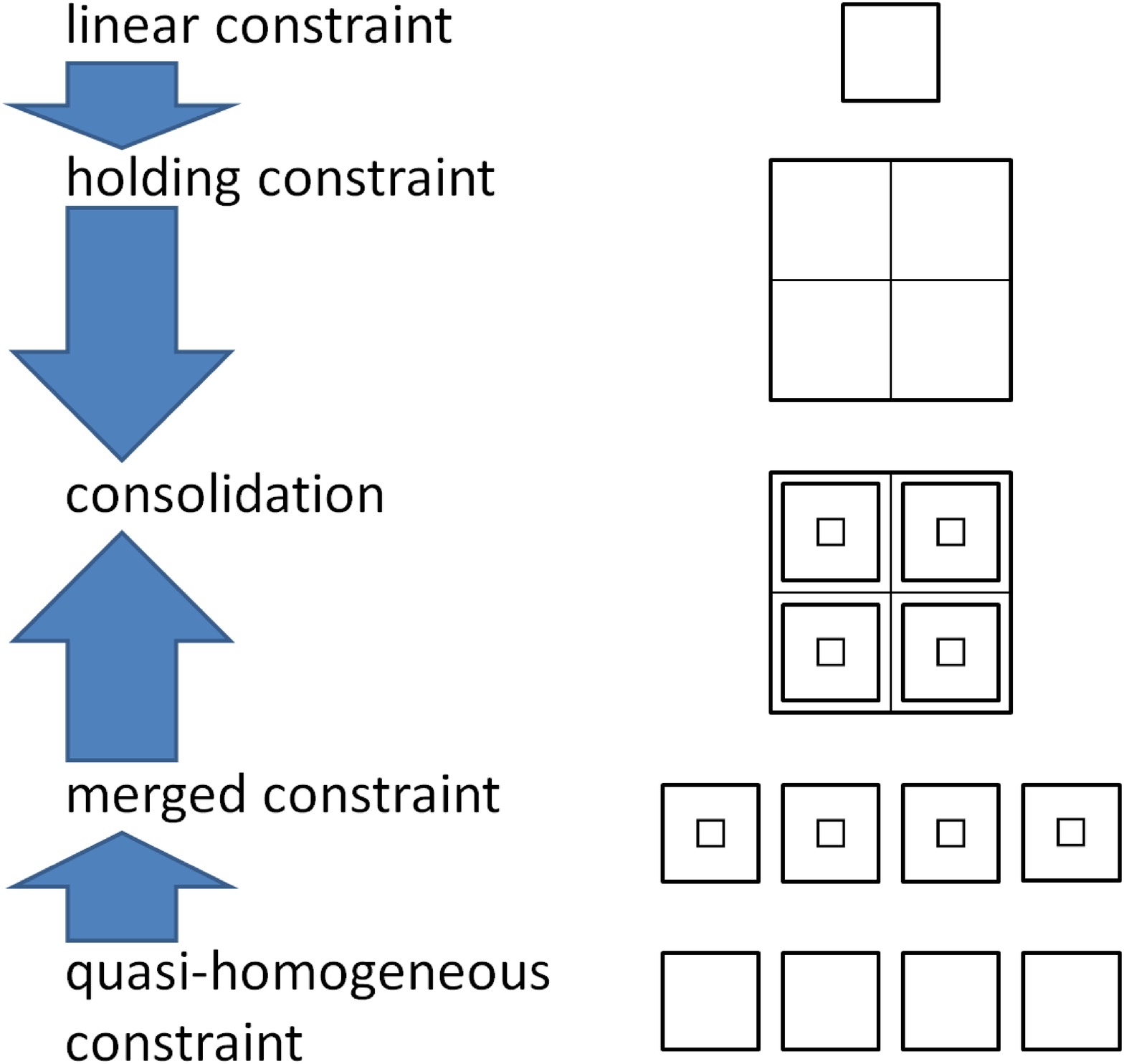}.
\begin{definition}[Consolidation]
Let $\mathcal{M}^{[r_0,r_1]}$ be a quasi-homogeneous
 constraint for a $\nu$-by-$\nu$ matrix
 for $0 \le r_0, r_1 < R$.
Let $\mathcal{H}$ be a set of linear constraints for an $R$-by-$R$ matrix.

For $\{ \mathcal{M}^{[r_0,r_1]} \}$ and $\mathcal{H}$,
we define another linear constraint $\mathcal{M} \boxplus \mathcal{H}$ 
for a $\nu R$-by-$\nu R$ matrix
as follows:
\begin{eqnarray*}
\mathcal{M} \boxplus \mathcal{H}
&:=&
\{ m^{[r_0, r_1] \square} (X^{[r_0,r_1]}) \mid \\
& & m^{[r_0,r_1]} \in \mathcal{M}^{[r_0, r_1]},
0 \le r_0, r_1 <R
\} \\
&  & \cup \{ h^{ \# } \mid h \in \mathcal{H} \},
\end{eqnarray*}
where $X$ is a $\nu R$-by-$\nu R$ matrix
and $X^{[r_0, r_1]}$ is the $(r_0,r_1)$th block of $X$
of size $\nu$-by-$\nu$.

We call $\mathcal{M} \boxplus \mathcal{H}$ a \textbf{consolidation} for
$\{ \mathcal{M}^{[r_0,r_1]} \}$ and $\mathcal{H}$.
\end{definition}

\begin{figure}[htbp]
 \includegraphics[width=85mm]{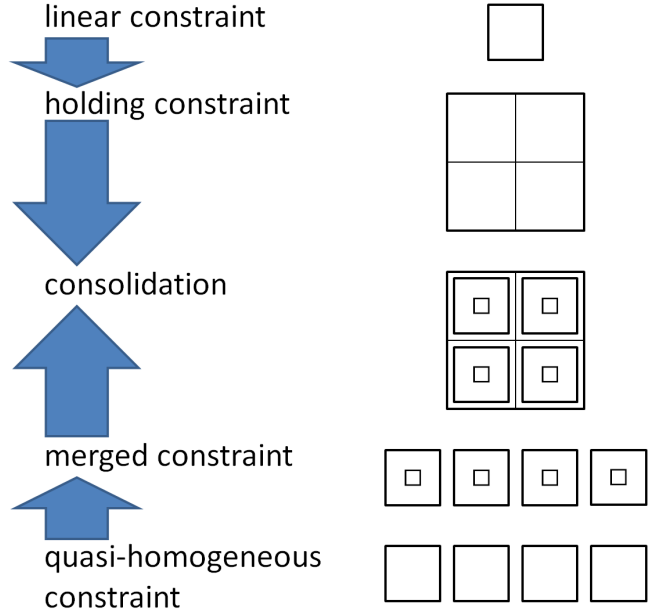}
 \caption{Consolidation}
 \label{hagiwaraPermutationCodes:consolidation.eps}
\end{figure}

\begin{example}
Let $\mathcal{L}_{D^{(2)} }$ be the doubly stochastic constraint in Note \ref{hagiwaraPermutationCodes2012:note:doublyStochasticMatrix}
 for a 2-by-2 matrix.
Put $\mathcal{M}^{[0,0]} := \mathcal{L}_{D^{(2)}}$,
$\mathcal{M}^{[0,1]} := \mathcal{L}_{D^{(2)}}$, and
$\mathcal{M}^{[1,0]} := \mathcal{L}_{D^{(2)}}$.
Put $\mathcal{M}^{[1,1]} := \mathcal{L}_{D^{(2)}} \cup \{ X_{0,0} + X_{1,1} = 0 \}$.
Let $\mathcal{H}$ be the doubly stochastic constraint in Note \ref{hagiwaraPermutationCodes2012:note:doublyStochasticMatrix}
 for a 3-by-3 matrix.

Then the consolidation $\mathcal{M} \boxplus \mathcal{H}$
 is a doubly stochastic constraint for a 6-by-6 matrix.
$\mathcal{M} \boxplus \mathcal{H}$ consists of:
\begin{eqnarray*}
\sum_{0 \le i < 6} X_{i, j_0} &=& 1, \text{ for } 0 \le j_0 < 6,\\
\sum_{0 \le j < 6} X_{i_0, j} &=& 1, \text{ for } 0 \le i_0 < 6,\\
X_{i_0, j_0} &\ge& 0, \text{ for } 0 \le i_0, j_0 < 6,\\
\sum_{0 \le i < 3} X_{2 r_0 + i, 2 r_1 + j_0}
 &=&
\sum_{0 \le j < 3} X_{2 r_0 , 2 r_1 + j}, \text{ for }\\
 & &
 0 \le j_0 < 3,
 0 \le r_0, r_1 < 2,\\
\sum_{0 \le j < 3} X_{2 r_0 + i_0, 2 r_1 + j} 
 &=&
\sum_{0 \le i < 3} X_{2 r_0 +i , 2 r_1 }, \text{ for }\\
 & &
 0 \le i_0 < 3,
 0 \le r_0, r_1 < 2,\\
X_{3,3} + X_{4,4} + X_{5,5} &=& 0.
\end{eqnarray*}
\end{example}

The main contribution of this section is the following:
\begin{theorem}
\label{hagiwaraPermutationCodes2012:thm:consolidationConstruction}
Let $\mathcal{M}^{[r_0,r_1]}$ be a quasi-homogeneous constraint for a $\nu$-by-$\nu$ matrix
 for $0 \le r_0, r_1 < R$.
Let $\mathcal{H}$ be a doubly stochastic constraint for an $R$-by-$R$ matrix.

If $\mathcal{M}^{[r_0, r_1]}$ for any $r_0, r_1$
and
$\mathcal{H}$
 are compact,
we have the following:
\begin{itemize}
\item[1)] $\mathrm{Ver}( \mathcal{D}_{\nu R} [ \mathcal{M} \boxplus \mathcal{H}])
 = \{ (H_{r_0, r_1} X^{[r_0, r_1]} ) \mid
 H \in \mathrm{Ver}( \mathcal{D}_R [\mathcal{H}]),
 X^{[r_0,r_1]} \in \mathrm{Ver}( \mathcal{D}_\nu [ \mathcal{M}^{[r_0,r_1]} ]),
 0 \le r_0, r_1 < R \}.$
\item[2)] the consolidation
$\mathcal{M} \boxplus \mathcal{H}$ is compact.
\item[3)] the cardinality of $\mathrm{Ver}(\mathcal{D}_{\nu R} [\mathcal{M} \boxplus \mathcal{H}] )$ is
\[
\sum_{\sigma \in \mathrm{Ver}( \mathcal{D}_R [ \mathcal{H}] )}
 v^{[0, \sigma(0)]}
 v^{[1, \sigma(1)]}
 \cdots
 v^{[R-1, \sigma(R-1)]},
\]
where $v^{[r, \sigma(r)]}$ denotes
 the cardinality of $\mathrm{Ver}( \mathcal{M}^{[r, \sigma(r)]} )$.
\end{itemize}
\end{theorem}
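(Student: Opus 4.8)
The plan is to reduce the global polytope $\mathcal{D}_{\nu R}[\mathcal{M}\boxplus\mathcal{H}]$ to its block data and to prove all three claims from a single structural description. First I would record the decomposition that underlies everything: a matrix $X$ satisfies $\mathcal{M}\boxplus\mathcal{H}$ if and only if each block $X^{[r_0,r_1]}$ satisfies the merged constraint $\mathcal{M}^{[r_0,r_1]\square}$ and the $R$-by-$R$ matrix $H$ whose $(r_0,r_1)$ entry is the common row/column sum $s_{r_0,r_1}$ of $X^{[r_0,r_1]}$ satisfies $\mathcal{H}$. This is almost the definition of consolidation, but the step that makes it usable is the Constant Sum Property (Lemma \ref{hagiwaraPermutationCodes2012:proposition:constantSumProperty}): the weak row- and column-sum constraints inside each $\mathcal{M}^{[r_0,r_1]\square}$ force every row and column of the block to share one common value $s_{r_0,r_1}$, so the holding constraint's substitution of $H_{r_0,r_1}$ by the first-row sum $\sum_{0\le j<\nu} X^{[r_0,r_1]}_{0,j}$ really does read off the block sum. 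As a byproduct this computation shows $\mathcal{M}\boxplus\mathcal{H}$ is a doubly stochastic constraint: each full row sum of $X$ equals $\sum_{r_1} s_{r_0,r_1}$, the corresponding row sum of $H$, which is $1$ because $\mathcal{H}$ is doubly stochastic, and positivity descends from the blocks.

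Next I would describe each merged polytope as a cone. Because $\mathcal{M}^{[r_0,r_1]\square}$ retains only homogeneous constraints together with the weak sum constraints, scaling gives $\mathcal{D}_\nu[\mathcal{M}^{[r_0,r_1]\square}] = \{ s Y : s \ge 0,\ Y \in \mathcal{D}_\nu[\mathcal{M}^{[r_0,r_1]}]\}$: if the common sum $s$ is positive then $Y := X^{[r_0,r_1]}/s$ has all sums equal to $1$ and still satisfies the homogeneous constraints, hence lies in the compact polytope $\mathcal{D}_\nu[\mathcal{M}^{[r_0,r_1]}]$, while $s=0$ forces the block to vanish by positivity. Since $\mathcal{M}^{[r_0,r_1]}$ is compact, $\mathrm{Ver}(\mathcal{D}_\nu[\mathcal{M}^{[r_0,r_1]}]) \subset S_\nu$, so each block is a nonnegative multiple of a doubly stochastic matrix whose extreme points are permutation matrices.

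With these two facts I would prove the vertex identity in claim 1). For the easy inclusion, any matrix of the displayed form is a block matrix whose nonzero blocks are $\nu$-by-$\nu$ permutation matrices placed according to the permutation $H \in \mathrm{Ver}(\mathcal{D}_R[\mathcal{H}]) \subset S_R$; such a matrix lies in the polytope by the decomposition above and is itself a permutation matrix in $S_{\nu R}$, hence an extreme point of any subpolytope of the Birkhoff polytope containing it. For the reverse inclusion, let $X$ be a vertex. I would first show $H$ is a vertex of $\mathcal{D}_R[\mathcal{H}]$: given a nontrivial splitting $H = \tfrac12(H_0+H_1)$ in $\mathcal{D}_R[\mathcal{H}]$, the cone description lets me write $X^{[r_0,r_1]} = s_{r_0,r_1} Y^{[r_0,r_1]}$ and lift the splitting to $X_k^{[r_0,r_1]} := (H_k)_{r_0,r_1} Y^{[r_0,r_1]}$, reproducing $X = \tfrac12(X_0+X_1)$ with $X_0 \ne X_1$ and contradicting extremality (the degenerate blocks with $s_{r_0,r_1}=0$ cause no trouble, since then $(H_0)_{r_0,r_1}=(H_1)_{r_0,r_1}=0$). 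By compactness of $\mathcal{H}$ this makes $H$ a permutation matrix, so in each block-row exactly one block is nonzero with common sum $1$. A second, simpler splitting argument, now replacing a single nonzero block by a nontrivial convex combination inside its own merged polytope while fixing the other blocks, shows each nonzero block is a vertex of $\mathcal{D}_\nu[\mathcal{M}^{[r_0,r_1]}]$, hence a permutation matrix. This places $X$ in the displayed set and, since every vertex is thereby a permutation matrix, yields claim 2): $\mathcal{M}\boxplus\mathcal{H}$ has finitely many constraints, its polytope is bounded inside the Birkhoff polytope, and its vertex set lies in $S_{\nu R}$. Claim 3) is then a counting step: the map sending a vertex $X$ to the pair consisting of its block-sum permutation $\sigma$ and its tuple of nonzero blocks is a bijection onto $\{(\sigma, (X^{[r,\sigma(r)]})_r)\}$, because $\sigma$ is recovered as the support pattern of nonzero blocks and those blocks range independently over the vertex sets $\mathrm{Ver}(\mathcal{D}_\nu[\mathcal{M}^{[r,\sigma(r)]}])$; summing the product of their cardinalities over $\sigma$ gives the stated formula.

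I expect the main obstacle to be the forward direction of the vertex characterization, specifically the lifting of a splitting of $H$ back to a splitting of $X$ through the nonlinear relation $X^{[r_0,r_1]} = s_{r_0,r_1} Y^{[r_0,r_1]}$: getting the bookkeeping right at the vanishing blocks, and verifying that the lifted matrices genuinely satisfy the holding constraints (i.e. that their block-sum matrices are exactly $H_0$ and $H_1$), is where care is needed. The cone description from the second paragraph is the device that makes this lifting legitimate, so establishing it cleanly is the crux of the argument.
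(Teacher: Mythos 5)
Your proposal is correct and follows essentially the same route as the paper's own proof: the block/subtotal decomposition (Lemmas \ref{hagiwaraPerm2012:lem:subtotal}, \ref{hagiwaraPerm2012:lem:consolidationIsDSC} and \ref{hagiwaraPerm2012:lem:local}, resting on the Constant Sum Property), two splitting-and-lifting arguments for the forward vertex inclusion, the Birkhoff--von Neumann theorem for the converse inclusion, and the same support-pattern counting for claim 3). Your cone description of the merged block polytope is a repackaging of the paper's local lemma (Lemma \ref{hagiwaraPerm2012:lem:local}), and your choice to establish that the subtotal $H$ is a permutation matrix \emph{before} the block-level splitting is a mild streamlining of the bookkeeping that the paper carries out with a general nonzero entry $H_{r_0,r_1}$.
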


For presenting the proof of the theorem above (Theorem \ref{hagiwaraPermutationCodes2012:thm:consolidationConstruction}),
let us prepare some terminologies and statements.
\begin{definition}[subtotal]
Let $\mathcal{M} \boxplus \mathcal{H}$ be a consolidation
for a set $\{ \mathcal{M}^{[r_0,r_1]} \}$ of quasi-homogeneous constraints 
for a $\nu$-by-$\nu$ matrix and 
a doubly stochastic constraint $\mathcal{H}$
for an $R$-by-$R$ matrix.

For $X \in \mathcal{D}_{\nu R}[ \mathcal{M} \boxplus \mathcal{H} ]$,
define an $R$-by-$R$ matrix $H$ as
\[
H_{r_0, r_1} := \sum_{0 \le i < \nu} X_{i, 0}^{[r_0, r_1]}.
\]
We call $H$ a \textbf{subtotal} of $X$.
\end{definition}

\begin{lemma}
\label{hagiwaraPerm2012:lem:subtotal}
Let $\mathcal{M} \boxplus \mathcal{H}$ be a consolidation
for
a set
$\{ \mathcal{M}^{[r_0,r_1]} \}$
of
quasi-homogeneous constraints
for a $\nu$-by-$\nu$ matrix and 
a doubly stochastic constraint $\mathcal{H}$
for an $R$-by-$R$ matrix.
Let $X \in \mathcal{D}_{\nu R} [ \mathcal{M} \boxplus \mathcal{H} ]$,
and let $H$ be a subtotal of $X$.

Then $H \in \mathcal{D}_R [ \mathcal{H} ]$ holds.
\end{lemma}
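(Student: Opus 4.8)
The plan is to verify directly that the subtotal $H$ satisfies every constraint $h \in \mathcal{H}$, exploiting the two families of constraints that the consolidation $\mathcal{M} \boxplus \mathcal{H}$ bundles together. First I would unpack what $X \models \mathcal{M} \boxplus \mathcal{H}$ supplies. By the definition of consolidation, $X$ satisfies both the block constraints $m^{[r_0,r_1]\square}(X^{[r_0,r_1]})$ (for $m^{[r_0,r_1]} \in \mathcal{M}^{[r_0,r_1]}$) and the holding constraints $h^{\#}(X)$ (for $h \in \mathcal{H}$). From the first family, each block $X^{[r_0,r_1]}$ satisfies the merged constraint of $\mathcal{M}^{[r_0,r_1]}$; and because each $\mathcal{M}^{[r_0,r_1]}$ is quasi-homogeneous, its merged version contains, by construction, both the weak row-sum and the weak column-sum constraints for a $\nu$-by-$\nu$ matrix.

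The crux of the argument is to reconcile the two different ways a block sum enters the picture. The holding constraint $h^{\#}$ is built by substituting the first-row sum $\sum_{0 \le j < \nu} X_{0,j}^{[r_0,r_1]}$ for the symbol $H_{r_0,r_1}$, whereas the subtotal is defined via the first-column sum $H_{r_0,r_1} = \sum_{0 \le i < \nu} X_{i,0}^{[r_0,r_1]}$. I would close this gap by invoking the Constant Sum Property (Lemma \ref{hagiwaraPermutationCodes2012:proposition:constantSumProperty}): since $X^{[r_0,r_1]}$ satisfies both the weak row-sum and the weak column-sum constraints, every row sum and every column sum of that block coincide. In particular the first-row sum equals the first-column sum, so the quantity substituted into $h^{\#}$ is exactly the subtotal entry $H_{r_0,r_1}$.

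With this identification in hand, the conclusion is immediate. For each $h \in \mathcal{H}$, the satisfied constraint $h^{\#}(X)$ is, coefficient for coefficient, the constraint $h$ evaluated at the subtotal matrix $H$; hence $X \models h^{\#}$ forces $H \models h$. As this holds for every $h \in \mathcal{H}$, we obtain $H \models \mathcal{H}$, which is precisely the assertion $H \in \mathcal{D}_R[\mathcal{H}]$.

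I expect the only genuine obstacle to be bookkeeping rather than substance: making the row-sum/column-sum interchange fully rigorous and confirming that the merged constraint really carries both weak-sum families, which is exactly where quasi-homogeneity of each $\mathcal{M}^{[r_0,r_1]}$ is used. It is worth noting that compactness of the pieces plays no role in this lemma; only the structural content of quasi-homogeneity together with the definitions of the merged and holding constraints is needed, so the lemma sits cleanly below the compactness hypotheses of Theorem \ref{hagiwaraPermutationCodes2012:thm:consolidationConstruction}.
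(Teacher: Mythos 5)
Your proof is correct and follows the same route as the paper: unpack each holding constraint $h^{\#}$, identify the substituted block sums with the entries of the subtotal $H$, and conclude $H \models h$ for every $h \in \mathcal{H}$. In fact your treatment is more careful than the paper's own proof, which passes from the first-row sums $\sum_{0 \le j < \nu} X^{[r_0,r_1]}_{0,j}$ appearing in $h^{\#}$ to the first-column sums $H_{r_0,r_1} = \sum_{0 \le i < \nu} X^{[r_0,r_1]}_{i,0}$ with only the phrase ``by the definition of subtotal''; your explicit appeal to the Constant Sum Property (Lemma \ref{hagiwaraPermutationCodes2012:proposition:constantSumProperty}), available because the merged quasi-homogeneous constraints supply both weak row-sum and weak column-sum constraints on each block, is exactly the justification that step requires.
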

\begin{proof}
Since $X \in \mathcal{D}_{\nu R}[ \mathcal{M} \boxplus \mathcal{H} ]$,
$X \models h^{\#}$ holds for any $h \in \mathcal{H}$.
Writing a constraint $h(H)$ as
\[h(H) : \sum_{0 \le r_0, r_1 < R} c_{r_0, r_1} H_{r_0, r_1} \trianglerighteq c_0,\]
we have 
$\sum_{0 \le r_{0}, r_{1} < R}
 c_{r_0,r_1} 
 (\sum_{0 \le j < \nu} X_{0,j}^{[r_0,r_1]})
 \trianglerighteq c_0 $ holds.
By the definition of subtotal,
$
\sum_{0 \le r_{0}, r_{1} < R}
 c_{r_0,r_1} H_{r_0,r_1} \trianglerighteq c_0$ holds.
Hence $H \models h$.
This implies $H \in \mathcal{D}_R [ \mathcal{H} ]$.
\end{proof}

\begin{lemma}
\label{hagiwaraPerm2012:lem:consolidationIsDSC}
Let $\mathcal{M}^{[r_0,r_1]}$ be a quasi-homogeneous
 constraint for a $\nu$-by-$\nu$ matrix
 for $0 \le r_0, r_1 < R$.
Let $\mathcal{H}$ be a doubly stochastic constraint for an $R$-by-$R$ matrix.

Then the consolidation
$\mathcal{M} \boxplus \mathcal{H}$ is a doubly stochastic constraint
 for a $\nu R$-by-$\nu R$ matrix.
\end{lemma}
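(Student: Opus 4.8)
The plan is to verify the defining property of a doubly stochastic constraint directly: I would show that any $\nu R$-by-$\nu R$ matrix $X$ with $X \models \mathcal{M} \boxplus \mathcal{H}$ necessarily satisfies $\mathcal{L}_{D^{(\nu R)}}$, i.e. all of its row-sums and all of its column-sums equal $1$ and all of its entries are nonnegative. So fix such an $X$, write $X^{[r_0,r_1]}$ for its blocks, and let $H$ be the subtotal of $X$.

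First I would determine the sums of each block. By construction $X^{[r_0,r_1]} \models \mathcal{M}^{[r_0,r_1]\square}$, and the merged constraint $\mathcal{M}^{[r_0,r_1]\square}$ contains the weak row-sum and weak column-sum constraints; hence by the Constant Sum Property (Lemma \ref{hagiwaraPermutationCodes2012:proposition:constantSumProperty}) all row-sums and all column-sums of the block $X^{[r_0,r_1]}$ share a single common value. That common value is precisely the subtotal entry $H_{r_0,r_1}$. This identification is the crux of the argument, since it lets me treat the column-$0$ sum (which defines the subtotal), the row-$0$ sum (which appears in each holding constraint $h^{\#}$), and an arbitrary block row- or column-sum as one and the same number.

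Next I would apply Lemma \ref{hagiwaraPerm2012:lem:subtotal} to obtain $H \in \mathcal{D}_R[\mathcal{H}]$, and then use the hypothesis that $\mathcal{H}$ is a doubly stochastic constraint to conclude $H \models \mathcal{L}_{D^{(R)}}$; in particular every row-sum and every column-sum of $H$ equals $1$ and $H$ is nonnegative. A full row-sum of $X$ over the row $r_0 \nu + i'$ then decomposes as the sum over $0 \le r_1 < R$ of the row-$i'$ sums of the blocks $X^{[r_0,r_1]}$, each of which equals $H_{r_0,r_1}$ by the previous step, so the total is the $r_0$-th row-sum of $H$, namely $1$. The column-sum computation is entirely symmetric.

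Finally, positivity is inherited blockwise: the positivity constraints are homogeneous, so the positivity constraints present in each $\mathcal{M}^{[r_0,r_1]}$ survive the merging operation unchanged inside $\mathcal{M}^{[r_0,r_1]\square}$, forcing every block, and hence $X$ itself, to be entrywise nonnegative. I expect the one genuinely delicate point to be the block-sum identification of the first step—getting the Constant Sum Property to collapse the row-$0$ sum, the column-$0$ sum, and the generic block sums into the single value $H_{r_0,r_1}$—whereas the row/column totalling in the third step is routine reindexing over the $R^2$ blocks.
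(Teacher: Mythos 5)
Your proposal is correct and takes essentially the same route as the paper's own proof: decompose each row/column sum of $X$ into block sums, identify those block sums with the subtotal entries $H_{r_0,r_1}$, invoke Lemma \ref{hagiwaraPerm2012:lem:subtotal} and the fact that $\mathcal{H}$ is a doubly stochastic constraint to conclude each total equals $1$, and inherit positivity blockwise from the (homogeneous, hence merge-invariant) positivity constraints. If anything, your explicit appeal to the Constant Sum Property (Lemma \ref{hagiwaraPermutationCodes2012:proposition:constantSumProperty}) to reconcile the column-$0$ sum defining the subtotal with the row-$0$ sum appearing in the holding constraints is slightly more careful than the paper, which invokes it only in the column-sum case.
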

\begin{proof}
Let us show that the definition of linear constraint holds on
 $\mathcal{M} \boxplus \mathcal{H}$.
Since $\square$ and $\#$ map a linear constraint to a linear constraint,
$\mathcal{M} \boxplus \mathcal{H}$ is a set of linear constraints.

Let $X$ be a matrix such that $X \models \mathcal{M} \boxplus \mathcal{H}$.

(On row-sum constraints):
For $0 \le r_0 < R$ and $0 \le i < \nu$,
\begin{eqnarray*}
\text{$(r_0 \nu+i)$th row-sum}
&=&
\sum_{0 \le r_1 \nu + j < \nu R} X_{r_0 \nu + i, r_1 \nu + j}\\
&=&
\sum_{0 \le r_1 < R} \sum_{ 0 \le j < \nu} X^{ [r_0, r_1] }_{i,j}.
\end{eqnarray*}
By the weak row-sum constraints,
\begin{eqnarray*}
\sum_{0 \le r_1 < R} \sum_{ 0 \le j < \nu} X^{[r_0,r_1]}_{i,j}
&=&
\sum_{0 \le r_1 < R} \sum_{ 0 \le i < \nu} X^{[r_0,r_1]}_{ i, 0}\\
&=&
\sum_{0 \le r_1 < R} H_{r_0, r_1}.
\end{eqnarray*}
where $H$ is the subtotal of $X$.
The last term is the $r_0$th row-sum of $H$.
Remember that 
$\mathcal{H}$ is a doubly stochastic constraint
and
$H \models \mathcal{H}$,
in particular,
$H$ satisfies the row-sum constraints.
Thus the $r_0$th row-sum is equal to $1$.

(On column-sum constraints):
A proof is done by a similar argument to the above
and by the constraint sum property (Lemma \ref{hagiwaraPermutationCodes2012:proposition:constantSumProperty}).

(On positivity):
Positivity holds from one of $\mathcal{M}^{[r_0, r_1]}$.
\end{proof}

\begin{lemma}
\label{hagiwaraPerm2012:lem:local}
Let $\mathcal{M} \boxplus \mathcal{H}$ be a consolidation
for
a set 
$\{ \mathcal{M}^{[r_0,r_1]} \}$
of
quasi-homogeneous constraints
for a $\nu$-by-$\nu$ matrix and 
a doubly stochastic constraint $\mathcal{H}$
for an $R$-by-$R$ matrix.
Let $X \in \mathcal{D}_{\nu R} [ \mathcal{M} \boxplus \mathcal{H} ]$,
and let $H$ be a subtotal of $X$.

If an entry $H_{r_0, r_1}$ is not equal to $0$,
$\frac{1}{H_{r_0,r_1}} X^{[r_0,r_1]} \in \mathcal{D}_{R} [ \mathcal{M}^{[r_0, r_1]} ]$
holds,
where $X^{[r_0, r_1]}$ is the $(r_0, r_1)$th block of $X$.
\end{lemma}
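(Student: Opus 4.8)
The plan is to verify directly that the scaled block $\tfrac{1}{H_{r_0,r_1}}X^{[r_0,r_1]}$ satisfies each of the three families of constraints that make up the quasi-homogeneous constraint $\mathcal{M}^{[r_0,r_1]}$, namely its homogeneous constraints, its (full) row-sum constraints, and its (full) column-sum constraints; membership in the $\nu$-by-$\nu$ polytope $\mathcal{D}_\nu[\mathcal{M}^{[r_0,r_1]}]$ is exactly this. The first move is a reduction: by the definition of the consolidation, $X \models \mathcal{M} \boxplus \mathcal{H}$ forces each constraint $m^{[r_0,r_1]\square}(X^{[r_0,r_1]})$ to hold, so the block $X^{[r_0,r_1]}$ satisfies the \emph{merged} constraint $\mathcal{M}^{[r_0,r_1]\square}$, i.e.\ the homogeneous part of $\mathcal{M}^{[r_0,r_1]}$ together with the weak row-sum and weak column-sum constraints. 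I would also record the scalar $s := H_{r_0,r_1}$ and argue $s>0$: since the consolidation is a doubly stochastic constraint (Lemma~\ref{hagiwaraPerm2012:lem:consolidationIsDSC}), $X$ obeys positivity, so every block entry is nonnegative, whence $s=\sum_{0\le i<\nu} X^{[r_0,r_1]}_{i,0}\ge 0$; combined with the hypothesis $s\neq 0$ this gives $s>0$, which is what legitimizes dividing by $s$ while preserving inequality directions.

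For the homogeneous constraints, I would note that each homogeneous $m\in\mathcal{M}^{[r_0,r_1]}$ appears unchanged in $\mathcal{M}^{[r_0,r_1]\square}$, so $X^{[r_0,r_1]}$ already satisfies it. Because $m$ has zero constant term, it reads $\sum_{i,j} c_{i,j} X^{[r_0,r_1]}_{i,j} \trianglerighteq 0$, and multiplying through by the positive scalar $1/s$ preserves both the equality and the inequality forms of $\trianglerighteq$. Hence $\tfrac{1}{s}X^{[r_0,r_1]}$ satisfies every homogeneous constraint of $\mathcal{M}^{[r_0,r_1]}$.

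For the row- and column-sum constraints I would invoke the Constant Sum Property (Lemma~\ref{hagiwaraPermutationCodes2012:proposition:constantSumProperty}). Since the block satisfies both the weak row-sum and the weak column-sum constraints, all of its row-sums and column-sums coincide with a single common value $w$. The key identification is that this common value is precisely $s$: indeed $s$ is the $0$-th column-sum $\sum_{0\le i<\nu} X^{[r_0,r_1]}_{i,0}$, which by the Constant Sum Property equals $w$. Dividing by $s$ then turns every row-sum and every column-sum of $\tfrac{1}{s}X^{[r_0,r_1]}$ into $1$, which are exactly the full row-sum and column-sum constraints of $\mathcal{M}^{[r_0,r_1]}$. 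Collecting the three families, $\tfrac{1}{s}X^{[r_0,r_1]}\models \mathcal{M}^{[r_0,r_1]}$, i.e.\ it lies in $\mathcal{D}_\nu[\mathcal{M}^{[r_0,r_1]}]$.

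I expect the main point requiring care—more a subtlety than a genuine difficulty—to be this passage from weak sums back to full sums. The merged constraint records only that all the block's line-sums are \emph{equal}, not what they equal; one must therefore pin the common value to $s$ through the definition of the subtotal (noting that the subtotal uses a column-sum while the holding constraints use the $0$-th row-sum, the two agreeing exactly by the Constant Sum Property), and one must secure $s>0$ before scaling any homogeneous inequalities. Once these two observations are in place, the remainder is routine bookkeeping.
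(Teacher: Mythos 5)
Your proof is correct and follows essentially the same route as the paper's: split the quasi-homogeneous constraint into its homogeneous part (inherited unchanged from the merged constraints and preserved under scaling) and its row/column-sum part (handled by the Constant Sum Property plus the definition of the subtotal). You are in fact slightly more careful than the paper, which divides an inequality by $H_{r_0,r_1}$ without explicitly noting $H_{r_0,r_1}>0$; your positivity argument via Lemma~\ref{hagiwaraPerm2012:lem:consolidationIsDSC} fills that small gap.
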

\begin{proof}
Let $m \in \mathcal{M}^{[r_0, r_1]}$
and 
\[m(Y) : \sum_{0 \le i,j < \nu} c_{i,j} Y_{i,j} \trianglerighteq c_0.\]
Since $\mathcal{M}^{[r_0, r_1]}$ is quasi-homogeneous,
we have $c_0 =0$ or $c_0=1$.

Case $c_0 = 0$:
since $X \in \mathcal{D}_{\nu R} [ \mathcal{M} \boxplus \mathcal{H} ]$
and
the merged constraint $m^{\square}$ is homogeneous,
$X \models m^{\square}$.
Thus
\[
\sum_{0 \le i,j < \nu} c_{i,j} X^{[r_0,r_1]}_{i,j} \trianglerighteq 0
\] holds.
By dividing it by $H_{r_0, r_1}$,
we have
\[
\sum_{0 \le i,j < \nu} c_{i,j} ( \frac{1}{H_{r_0, r_1}} X^{[r_0,r_1]}_{i,j}) \trianglerighteq 0.
\]
Hence $( \frac{1}{H_{r_0, r_1}} X^{[r_0,r_1]}_{i,j} ) \models m$.

Case $c_0 = 1$:
By the definition of quasi-homogeneous constraint,
$m$ must be a row-sum or the column-sum constraint.
By the constant sum property (Lemma \ref{hagiwaraPermutationCodes2012:proposition:constantSumProperty}),
any row-sum and any column-sum of $(\frac{1}{H_{r_0, r_1}} X^{[r_0,r_1]}_{i,j})$ are equal.
Therefore it is enough to show that
\[ \sum_{0 \le i < \nu} \frac{1}{H_{r_0, r_1} } X^{[r_0,r_1]}_{i,0} = 1, \]
equivalently
\[ \sum_{0 \le i < \nu} X^{[r_0,r_1]}_{i,0} = H_{r_0, r_1}. \]
This follows from the definition of subtotal $H$.
\end{proof}

\begin{lemma}
\label{hagiwaraPermutationCodes2012:lemma:supportStructure}
Let $X, X^{(0)},$ and $X^{(1)}$ be doubly stochastic matrices
 different from each other
and $c_0, c_1$ positive numbers
such that $X = c_0 X^{(0)} + c_1 X^{(1)}$ holds.

If $X_{i,j}^{(0)} \neq 0$ or $X_{i,j}^{(1)} \neq 0$ hold,
 then $X_{i,j} \neq 0$ holds.
\end{lemma}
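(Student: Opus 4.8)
The plan is to reduce the statement to the single entrywise identity coming from the matrix equation, and then to read off the conclusion from two facts: entries of doubly stochastic matrices are nonnegative, and the coefficients $c_0, c_1$ are strictly positive. First I would invoke the positivity constraint contained in $\mathcal{L}_D$ (Note \ref{hagiwaraPermutationCodes2012:note:doublyStochasticMatrix}): since $X^{(0)}$ and $X^{(1)}$ are doubly stochastic, every one of their entries satisfies $X^{(0)}_{i,j} \ge 0$ and $X^{(1)}_{i,j} \ge 0$.

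Next I would pass from the matrix equation $X = c_0 X^{(0)} + c_1 X^{(1)}$ to its $(i,j)$-entry, namely $X_{i,j} = c_0 X^{(0)}_{i,j} + c_1 X^{(1)}_{i,j}$. This exhibits $X_{i,j}$ as a sum of two terms, each a product of a strictly positive coefficient with a nonnegative number, so each term is itself nonnegative. I would then argue by cases on which of the two hypotheses holds. If $X^{(0)}_{i,j} \neq 0$, then combined with $X^{(0)}_{i,j} \ge 0$ this forces $X^{(0)}_{i,j} > 0$, whence $c_0 X^{(0)}_{i,j} > 0$; since the remaining term $c_1 X^{(1)}_{i,j}$ is nonnegative, the total is strictly positive and $X_{i,j} \neq 0$. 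The case $X^{(1)}_{i,j} \neq 0$ is handled identically, with the roles of the two matrices exchanged.

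I do not expect any genuine obstacle here: the statement is a support-monotonicity property that uses only sign information, not the finer Birkhoff--von Neumann structure, and the hypothesis that the three matrices are pairwise distinct plays no role in this direction. The only point requiring any care is to make explicit that ``$\neq 0$'' together with nonnegativity upgrades to ``$> 0$'', which is what lets a single strictly positive summand dominate a nonnegative one; this is precisely where the positivity half of the doubly stochastic constraint $\mathcal{L}_D$ enters, and it is the sole hypothesis that does real work.
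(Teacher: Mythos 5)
Your proof is correct and is essentially the same argument as the paper's, which simply states that the lemma ``follows immediately from the positivity of doubly stochastic matrices''; you have just spelled out the entrywise details (nonnegativity of entries, strict positivity of $c_0, c_1$, and the upgrade from $\neq 0$ to $> 0$) that the paper leaves implicit.
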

\begin{proof}
It follows immediately from the positivity of doubly stochastic matrices.
\end{proof}

\begin{proof}[Proof for Theorem \ref{hagiwaraPermutationCodes2012:thm:consolidationConstruction}]
1)
Let $H$ be a subtotal of $X$.
First let us show that $X \in \mathrm{Ver}( \mathcal{D}_{\nu R}[ \mathcal{M} \boxplus \mathcal{H} ])$
implies $H \in \mathrm{Ver}( \mathcal{D}_{R} [ \mathcal{H} ])$.
By Lemma \ref{hagiwaraPerm2012:lem:consolidationIsDSC}, $H \in \mathrm{D}_R [\mathcal{H}]$ holds.
Assume $H \not \in \mathrm{Ver}( \mathcal{D}_{R} [ \mathcal{H} ])$,
in other words, $H = c_0 H^{(0)} + c_1 H^{(1)}$ 
for some $c_0, c_1 > 0$ and $H^{(0)}, H^{(1)} \in \mathcal{D}_R [ \mathcal{H} ]$
with $H^{(0)} \neq H^{(1)}$.
This implies that $H_{r_0, r_1} = c_{0} H^{(0)}_{r_0, r_1} +  c_{1} H^{(1)}_{r_0, r_1}.$
Define matrices $X^{(0)}$ and $X^{(1)}$ by
\begin{equation*}
X^{(s) [r_0,r_1]} := \left\{
\begin{array}{ll}
 0 & \text{if } H_{r_0, r_1} = 0 \\
 \frac{ H^{(s)}_{r_0,r_1} }{H_{r_0, r_1} }X^{[r_0, r_1]} & \text{if } H_{r_0, r_1} \neq 0
\end{array}
\right.
\end{equation*}
for $s=0, 1$.
Then we have $X = c_0 X^{(0)} + c_1 X^{(1)}$ and $X^{(0)}, X^{(1)} \in \mathcal{D}_{\nu R}[ \mathcal{M} \boxplus \mathcal{H} ])$
with $X^{(0)} \neq X^{(1)}$.
Hence $X \not \in \mathrm{Ver}( \mathcal{D}_{\nu R}[ \mathcal{M} \boxplus \mathcal{H} ])$.

Next,
let us show that
 $X \in \mathrm{Ver}( \mathcal{D}_{\nu R}[ \mathcal{M} \boxplus \mathcal{H} ])$
and $H_{r_0, r_1} \neq 0$
implies $\frac{1}{H_{r_0, r_1} }X^{[r_0, r_1]} \in \mathrm{Ver}( \mathcal{D}_{\nu} [ \mathcal{M}^{[r_0, r_1]} ])$.
By Lemma \ref{hagiwaraPerm2012:lem:local},
$\frac{1}{H_{r_0, r_1} }X^{[r_0, r_1]} \in \mathcal{D}_{\nu} [ \mathcal{M}^{[r_0, r_1]} ]$ holds.
Assume that $\frac{1}{H_{r_0, r_1} }X^{[r_0, r_1]} \not \in \mathrm{Ver}( \mathcal{D}_{\nu} [ \mathcal{M}^{[r_0, r_1]} ])$.
In other words, $\frac{1}{H_{r_0, r_1} }X^{[r_0, r_1]} = c_2 Y^{(2)} + c_3 Y^{(3)}$ 
for some $c_2, c_3 > 0$ and $Y^{(2)}, Y^{(3)} \in \mathcal{D}_{\nu} [ \mathcal{M}^{[r_0, r_1]} ]$
with $Y^{(2)} \neq Y^{(3)}$.
Define matrices $X^{(2)}, X^{(3)}$ of size $\nu R$-by-$\nu R$ as
\begin{equation*}
X^{(s) [r_2,r_3]} := \left\{
\begin{array}{ll}
 Y^{(s)} & \text{if } r_2 = r_0, r_3 = r_1 \\
 X^{[r_2, r_3]} & \text{otherwise}
\end{array}
\right.
\end{equation*}
for $s=2, 3$.
Then we have 
$\frac{1}{H_{r_0, r_1} }X^{[r_0, r_1]}  = c_2 X^{(2)} + c_3 X^{(3)}$
and
$X^{(2)}, X^{(3)} \in \mathcal{D}_{\nu R}[ \mathcal{M} \boxplus \mathcal{H} ])$
with $X^{(2)} \neq X^{(3)}$.
Hence $X \not \in \mathrm{Ver}( \mathcal{D}_{\nu R}[ \mathcal{M} \boxplus \mathcal{H} ])$.

Conversely,
let $H' \in \mathrm{Ver}( \mathcal{D}_{R}[ \mathcal{H} ])$
and $X'^{[r_0, r_1]} \in \mathrm{Ver}( \mathcal{D}_{\nu}[ \mathcal{M}^{[r_0, r_1]} ])$.
Then define $X' := ( H_{r_0, r_1} X'^{[r_0, r_1]})$.
By the compact property,
$H$ and $X'^{[r_0, r_1]}$ are permutation matrices.
This implies that $X'$ is a permutation matrix.

We claim that $X' \in \mathrm{Ver}(\mathcal{D}_{\nu R}[ \mathcal{M} \boxplus \mathcal{H} ] )$.
If not, there exist $c_4, c_5 > 0$
 and $X^{(4)}, X^{(5)} \in \mathcal{D}_{\nu R}[ \mathcal{M} \boxplus \mathcal{H} ]$
such that 
\[
X' = c_4 X^{(4)} + c_5 X^{(5)}, X^{(4)} \neq X^{(5)}.
\]
Since $\mathcal{D}_{\nu R} [ \mathcal{M} \boxplus \mathcal{H} ]
\subset
\mathrm{DSM}_{\nu R}$,
we have $X', X^{(4)}, X^{(5)} \in \mathrm{DSM}_{\nu R}$.
Thus $X \not \in \mathrm{Ver}( \mathrm{DSM}_{\nu R})$,
but $X'$ is a permutation matrix.
This contradicts the Birkhoff von-Neumann theorem
(Theorem \ref{compactGraphCopyTheory:theorem:birkhoffVon-Neumann}).

2)
By Lemma \ref{hagiwaraPerm2012:lem:consolidationIsDSC},
the consolidation $ \mathcal{M} \boxplus \mathcal{H}$ is
a doubly stochastic constraint.
$ \mathcal{M} \boxplus \mathcal{H}$ consists of finite linear constraints.
Since $\mathrm{DSM}_{\nu R}$ is bounded and
$\mathcal{D}_{\nu R}[ \mathcal{M} \boxplus \mathcal{H} ] \subset
 \mathrm{DSM}_{\nu R}$,
$\mathcal{D}_{\nu R}[ \mathcal{M} \boxplus \mathcal{H} ] $ is bounded.

It remains to show that $\mathrm{Ver}( \mathcal{D}_{\nu R}[ \mathcal{M} \boxplus \mathcal{H} ] ) \subset S_{\nu R}$.
By 1), any vertex is written as $(H_{r_0, r_1} X^{[r_0, r_1]})$.
Since $X^{[r_0, r_1]}$
and $H$ are permutation matrices,
$(H_{r_0, r_1} X^{[r_0, r_1]})$
is a permutation matrix,
\textit{i.e.},
$\mathrm{Ver}( \mathcal{D}_{\nu R}[ \mathcal{M} \boxplus \mathcal{H} ] ) \subset S_{\nu R}$.

Hence $\mathcal{M} \boxplus \mathcal{H}$ is compact.

3)
Let $H \in \mathrm{Ver}[ \mathcal{D}_{R} (\mathcal{H}) ]$.
Define $V_H := \{ X \in \mathrm{Ver}( \mathcal{D}_{\nu R}[ \mathcal{M} \boxplus \mathcal{H} ] )
 \mid \text{the subtotal of } X \text{ is } H \}$.
Let us consider the cardinality of $V_H$.
Since $\mathcal{H}$ is compact,
$H$ is a permutation matrix.
Let $\sigma$ denote the permutation associated with $H$.
As a direct corollary of 1), the cardinality $\# V_H$ is at least 
$v^{[0, \sigma(0)]} v^{[1, \sigma(1)]} \dots v^{[R-1, \sigma(R-1)]}$.

Since 
$H$ is a permutation matrix,
 for each $0 \le r_0 < R$,
 there exists a unique $0 \le r_1 < R$ such that
 the $(r_0, r_1)$th block $X^{[r_0, r_1]}$ is not a zero-matrix,
that is $r_1 = \sigma(r_0)$.
Furthermore, $X^{[r_0, \sigma(r_0) ]} \in \mathrm{Ver}( \mathcal{D}_{\nu R}[ \mathcal{M}^{[r_0, \sigma(r_0)]} ] )$.
Thus $\# V_H $ is at most $v^{[0, \sigma(0)]} v^{[1, \sigma(1)]} \dots v^{[R-1, \sigma(R-1)]}$.
This implies that
$\# V_H = v^{[0, \sigma(0)]} v^{[1, \sigma(1)]} \dots v^{[R-1, \sigma(R-1)]}$.

Summing up $\# V_H$ so that $H$ is taken over $\mathrm{Ver}( \mathcal{D}_{R} \mathcal{H} )$,
we conclude the proof.
\end{proof}

\section{Encoding and Decoding for Permutation Code from Consolidation}
\subsection{Structure of Vertex of Consolidation}
In this section, we discuss an encoding and an decoding algorithms
for a permutation code $( G_{\mathcal{M} \boxplus \mathcal{H}}, \mu)$
associated with a consolidation $\mathcal{M} \boxplus \mathcal{H}$
for a set $\{ \mathcal{M}^{[r_0, r_1]} \}$ of 
compact quasi-homogeneous constraints for $\nu$-by-$\nu$ matrix
and a compact doubly stochastic constraint $\mathcal{H}$.

We assume that
``quasi-homogeneous constraints are the same
if their 1st indices are the same, \textit{i.e.},
for any $0 \le r_0, r_1 < R$,
 $\mathcal{M}^{[i, r_1]} = \mathcal{M}^{[i,0]}$ holds.
It implies that $v^{[i, r_1]} = v^{[i,0]}$ for $0 \le r_0 < R$,
where $v^{[r_0, r_1]} = \# \mathrm{Ver}( \mathcal{M}^{[r_0, r_1]}  )$.
Let $v_{r_0} $ denote $v^{[r_0, 0]}$ for $0 \le r_0 < R$
and $v_{R}$ denote $\# \mathrm{Ver}( \mathcal{H}  )$.
By Theorem \ref{hagiwaraPermutationCodes2012:thm:consolidationConstruction},
we have
\[
\# ( G_{ \mathcal{M} \boxplus \mathcal{H} }, \mu)  = v_0 v_1 \dots v_{R-1} v_R.
\]

\subsection{Encoding}
In sections
\ref{hagiwaraPermutationCdoes2012:subsection:CompactConstraints}
and
\ref{compactGraphCopyTheory:Section:ExampleOfCompactSeedGraph},
we gave some examples of compact quasi-homogeneous constraints.
Except for trees, we have a (reasonably) small computational cost
encoding algorithm for the matrix size.
\begin{itemize}
\item For $\mathcal{L}_D$ in Note \ref{hagiwaraPermutationCodes2012:note:doublyStochasticMatrix} for an $n$-by-$n$ matrix, 
an encoding algorithm with computational cost $O(n \log n)$ is known.
See \S 5.1 of \cite{Knuth:TheArtOfComputerProgramming}.
\item For a pure involution constraint $\mathcal{L}_P$
for an $n$-by-$n$ matrix,
an encoding algorithm with computational cost $O(n^2)$ has been given
 in \cite{Wadayama:LPDecodablePermutationCodesBasedOnLinearlyConstrainedPermutationMatrices}.
Note that it is compact only for $n= 2, 4$.
\item For a compact graph constraint $\mathcal{L}_\Gamma$
for an $n$-by-$n$ matrix,
we can construct 
an encoding algorithm with computational cost at most $O(n)$,
since the cardinality $G_\Gamma$ is at most $2n$.
\end{itemize}

From here, we define an encoding map $\mathrm{Enc}$
from
$\{ 0, 1, \dots, v_0 v_1 \dots v_R -1 \}$
to $G_{ \mathcal{M} \boxplus \mathcal{H} })$.
Our idea is to reduce this discussion to
 a ``local'' encoding $\mathrm{Enc}_{r_0} : \{0,1, \dots, v_{r_0} -1\}
\rightarrow G_{\mathcal{M}^{[r_0, 0]}}$,
 for $0 \le r_0 < R$
and
 an encoding $\mathrm{Enc}_{R} : \{0,1, \dots, v_{R} -1\}
\rightarrow G_{\mathcal{H} }$.

Remark that Theorem \ref{hagiwaraPermutationCodes2012:thm:consolidationConstruction}, 1)
can be stated as
\begin{eqnarray*}
& &\mathrm{Ver}( \mathcal{D}_{\nu R}[ \mathcal{M} \boxplus \mathcal{H} ])\\
&=&
\{
(\sigma | g_0, \dots, g_{R-1})
\mid\\
& &
\sigma \in \mathrm{Ver}( \mathcal{D}_{R}[\mathcal{H}] ),
g_r \in \mathrm{Ver}( \mathcal{D}_{\nu}[ \mathcal{M}^{[r, \sigma(r)]} ])
\}.
\end{eqnarray*}

\begin{definition}[Encoding algorithm $\mathrm{Enc}$]
We define an encoding algorithm $\mathrm{Enc}$ by the following steps:
\begin{itemize}
\item Input: integer $0 \le \mathrm{mes} < v_0 v_1 \dots v_R$ and a vector $\mu \in \mathbb{R}^{\nu R}$.
\item Output: a vector $\mu' \in \mathbb{R}^{\nu R}$.
\item 1. Set $j := 0$.
\item 2. Set $\mathrm{mes}_j := i \pmod {v_0 v_1 \dots v_{R-1 -j}}$.
\item 3. Update $\mathrm{mes} := (\mathrm{mes}-\mathrm{mes}_j) \; \mathrm{div} \; v_j$.
\item 4. Encode $\mathrm{mes}_j$ to a permutation $g_j$ by $\mathrm{Enc}_j$.
\item 5. Update $j := j+1$.
\item 6. If $j=R+1$ then go to 7. Else go to 2.
\item 7. Set $\mu' := (g_R | g_0, g_1, \dots, g_{R-1}) \mu$,
where $(g_R | g_0, g_1, \dots, g_{R-1})$ is a permutation matrix\footnote{
This notation is defined in \S\ref{hagiwaraPermutationCodes2011:definition:wreathProduct}.}.
\item 8. Output $\mu'$.
\end{itemize}
\end{definition}

\subsection{Decoding to Integer}
Similar to the last subsection,
we present a message decoding algorithm
by using a local decoding $\mathrm{Dec}_i$,
where $\mathrm{Dec}_{i}$ for $0 \le i < R$ is a decoder for $\mathcal{M}^{[i, 0]}$
and $\mathrm{Dec}_R$ is a decoder for $\mathcal{H}$.
\begin{definition}[Message Decoding Algorithm $\mathrm{Dec}$]
We define a message decoding algorithm $\mathrm{Dec}$ by the following steps:
\begin{itemize}
\item Input: a permutation matrix $X$.
\item Output: an integer $\mathrm{mes}$.
\item 1. Divide $X$ into $R^2$-blocks $X^{[r_0, r_1]}$
See \S\ref{hagiwaraPerm2012:subsec:MergedConstaints}
\item 2. Calculate the subtotal $H$ of $X$.
\item 3. Set $\mathrm{mes} := \mathrm{Dec}_R (H)$.
\item 4. Set $i := R$.
\item 5. Update $\mathrm{mes} := \mathrm{mes} \times v_i$.
\item 6. Check if there exists 
a unique $0 \le j <R$ such that $X^{[i, j]} \neq 0$.
If not, terminate the algorithm.
\item 7. Update $\mathrm{mes} := \mathrm{mes} + \mathrm{Dec}_i (X^{[i,j]})$.
\item 8. Update $i := i-1$.
\item 9. If $i \ge 0$, go to 5.
\item 10. Output $\mathrm{mes}$.
\end{itemize}
\end{definition}

\section{Automorphism of a Union of Seed Graphs}
We present an application of our theory.
For stating one of our main results,
we introduce more terminologies on graphs.
For a graph $\Gamma = ( V, E) $ and its vertex $v \in V$,
the cardinality $\# \{ i \in V \mid (i, v) \in E \}$ is said to be \textbf{in-degree} of $v$.
Similarly,
the cardinality $\# \{ j \in V \mid (v, j) \in E \}$ is said to be \textbf{out-degree} of $v$.

\begin{definition}[Seed Graph]
Let $\Gamma$ be a (directed or un-directed) graph.
In this paper,
$\Gamma$ is called a \textbf{seed graph}
if
$\Gamma$ is connected
and the in-degree of $v$ is equal to the out-degree of $v$ for any vertex $v$ of $\Gamma$.
\end{definition}

\begin{example}
Any un-directed connected graph is a seed graph,
\textit{e.g.}, tree, circle, and perfect graph.
As an example of directed graph,
a cycle is a seed graph
(see \S\ref{compactGraphCopyTheory:Section:ExampleOfCompactSeedGraph}).

A graph whose adjacency matrix is an identity matrix is not a seed graph,
since it is not connected.
\end{example}

For a graph $\Gamma = (\{ 0, 1,\dots, \nu \}, E)$ and a positive integer $R$,
a graph $\Gamma^{(R)} =(\{0,1,\dots, \nu R -1 \}, E^{(R)})$ is defined by the following adjacency matrix $A^{\Gamma^{(R)}}$:
\begin{eqnarray*}
A^{\Gamma^{(R)}} :=
\left(
\begin{array}{cccc}
A^{\Gamma} & \mathbf{0} & \dots      & \mathbf{0} \\
\mathbf{0} & A^{\Gamma} & \dots      & \mathbf{0} \\
\vdots     &            & \ddots     & \vdots     \\
\mathbf{0} & \dots      & \mathbf{0} & A^{\Gamma} \\
\end{array}
\right),
\end{eqnarray*}
where $A^{\Gamma}$ is the adjacency matrix for $\Gamma$ and $\mathbf{0}$ is a zero-matrix.
In this paper, the graph $\Gamma^{(R)}$ is said to be a \textbf{union} of $R$-$\Gamma$s.
Note that $\Gamma^{(R)}$ is not connected.
This implies that $\Gamma^{(R)}$ is not a seed graph.

The main contribution of this section is the following:
\begin{theorem}\label{compactGraphCopyTheory:theorem:main}
Let $\Gamma$ be a seed graph.

If $\Gamma$ is compact, then a union $\Gamma^{(R)}$ is also compact
for any positive integer $R$.
Furthermore
\[
\mathrm{Ver} ( \mathcal{D}[ \mathcal{L}_{\Gamma^{(R)} } ] )
=
\mathrm{Aut}(\Gamma^{(R)}).
\]
\qed
\end{theorem}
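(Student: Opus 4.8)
The plan is to recognize the graph constraint $\mathcal{L}_{\Gamma^{(R)}}$ as the defining constraint of a consolidation and then invoke Theorem \ref{hagiwaraPermutationCodes2012:thm:consolidationConstruction}. Concretely, I would take $\mathcal{M}^{[r_0,r_1]} := \mathcal{L}_{\Gamma}$ for every $0 \le r_0,r_1 < R$ and $\mathcal{H} := \mathcal{L}_{D^{(R)}}$. Since $A^{\Gamma^{(R)}}$ is block-diagonal with $A^{\Gamma}$ in each diagonal block, a direct block computation shows the commutation constraint decouples, i.e. $X A^{\Gamma^{(R)}} = A^{\Gamma^{(R)}} X$ is equivalent to
\[
X^{[r_0,r_1]} A^{\Gamma} = A^{\Gamma} X^{[r_0,r_1]}\quad\text{for all } 0 \le r_0,r_1 < R.
\]
Each $\mathcal{M}^{[r_0,r_1]}=\mathcal{L}_{\Gamma}$ is quasi-homogeneous and, as $\Gamma$ is compact, compact, while $\mathcal{H}=\mathcal{L}_{D^{(R)}}$ is a compact doubly stochastic constraint by Theorem \ref{compactGraphCopyTheory:theorem:birkhoffVon-Neumann}. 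Hence, once I establish the polytope identity
\[
\mathcal{D}_{\nu R}[ \mathcal{L}_{\Gamma^{(R)}} ] = \mathcal{D}_{\nu R}[ \mathcal{M} \boxplus \mathcal{H} ],
\]
Theorem \ref{hagiwaraPermutationCodes2012:thm:consolidationConstruction} delivers both conclusions: part 2 gives compactness, so $\mathrm{Ver}(\mathcal{D}_{\nu R}[\mathcal{L}_{\Gamma^{(R)}}]) \subset S_{\nu R}$, which by the equivalence \eqref{hagiwaraPerm2012:eqn:graphAut} is exactly $\mathrm{Ver}=\mathrm{Aut}(\Gamma^{(R)})$; and part 1 identifies these vertices with the wreath-type matrices $(\sigma \mid g_0,\dots,g_{R-1})$, $\sigma \in \mathrm{Ver}(\mathcal{D}_R[\mathcal{H}])=S_R$ and $g_r \in \mathrm{Ver}(\mathcal{D}_\nu[\mathcal{L}_{\Gamma}])=\mathrm{Aut}(\Gamma)$, which matches $\mathrm{Aut}(\Gamma^{(R)})=\mathrm{Aut}(\Gamma)\wr S_R$ (every automorphism of a disjoint union of copies of a connected graph permutes the copies and acts on each by an automorphism).

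The inclusion $\mathcal{D}_{\nu R}[\mathcal{M}\boxplus\mathcal{H}] \subseteq \mathcal{D}_{\nu R}[\mathcal{L}_{\Gamma^{(R)}}]$ is the easy direction: by Lemma \ref{hagiwaraPerm2012:lem:consolidationIsDSC} the consolidation is doubly stochastic, and its blockwise homogeneous constraints are exactly the $\square$-unchanged commutation relations displayed above, which reassemble to $X A^{\Gamma^{(R)}}=A^{\Gamma^{(R)}}X$. The reverse inclusion is the crux. Given $X \models \mathcal{L}_{\Gamma^{(R)}}$, positivity and the blockwise commutation are immediate, so the real content is to prove that \emph{each block obeys the weak row-sum and weak column-sum constraints}; once that is in hand, the holding constraints $\mathcal{H}^{\#}$ follow routinely from the full doubly stochastic row/column sums together with the constant sum property (Lemma \ref{hagiwaraPermutationCodes2012:proposition:constantSumProperty}).

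So I expect the main obstacle to be showing that every block $X^{[r_0,r_1]}$ has all row sums equal and all column sums equal, and this is precisely where the seed-graph hypothesis enters. Writing $A=A^{\Gamma}$ and letting $\mathbf{1}$ be the all-ones vector, the seed property (in-degree $=$ out-degree at each vertex) gives $A\mathbf{1}=A^{T}\mathbf{1}$, and connectedness makes $A$ irreducible. For a block $B=X^{[r_0,r_1]}$ with $BA=AB$, the row-sum vector $\rho:=B\mathbf{1}$ satisfies $A\rho = B(A\mathbf{1})$; when $\Gamma$ is regular this reads $A\rho = d\,\rho$, so $\rho$ is a nonnegative top eigenvector of $A$ and hence a multiple of $\mathbf{1}$, yielding weak sums from commutation alone (this already settles the complete graph, circle, and cycle). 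For non-regular seed graphs such as trees and lines, commutation alone does \emph{not} force equal block row sums, and here the global doubly stochastic constraint must be coupled with positivity: summing the defects $\rho_i(X^{[r_0,r_1]})-\rho_{i'}(X^{[r_0,r_1]})$ over $r_1$ gives zero because the $(r_0\nu+i)$th and $(r_0\nu+i')$th full row sums of $X$ both equal $1$, while positivity of all blocks constrains the signs of these defects so that each must vanish individually. Making this coupling argument uniform across all seed graphs — rather than the case-by-case sign check visible in the small path example — is the delicate step; irreducibility of $A$ and the balance $A\mathbf{1}=A^{T}\mathbf{1}$ are the structural inputs I would lean on, with the column-sum statement following by the symmetric argument.
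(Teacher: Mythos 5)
Your overall architecture coincides with the paper's: you take $\mathcal{M}^{[r_0,r_1]} := \mathcal{L}_{\Gamma}$ and $\mathcal{H} := \mathcal{L}_{D^{(R)}}$, note that $X A^{\Gamma^{(R)}} = A^{\Gamma^{(R)}} X$ decouples into blockwise commutation relations, reduce the theorem to the polytope identity $\mathcal{D}[\mathcal{L}_{\Gamma^{(R)}}] = \mathcal{D}[\mathcal{M}\boxplus\mathcal{H}]$ (this is the paper's Corollary \ref{hagiwaraPerm2012:cor:colocolo}), and then finish with Theorem \ref{hagiwaraPermutationCodes2012:thm:consolidationConstruction} and equation (\ref{hagiwaraPerm2012:eqn:graphAut}). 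You also correctly isolate the crux: every block of $X \in \mathcal{D}[\mathcal{L}_{\Gamma^{(R)}}]$ must satisfy the weak row-sum and weak column-sum constraints, which is exactly the paper's Lemma \ref{compactGraphCopyTheory:lemma:KeyLemmaForMainTheorem}. Your Perron--Frobenius argument does settle that crux for \emph{regular} seed graphs: from $A\rho = d\rho$ with $A$ irreducible and $\rho \ge 0$, the row-sum vector of each block is constant.

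For non-regular seed graphs, however, there is a genuine gap, and your proposed repair does not work. Summing the defects $\rho_i(X^{[r_0,r_1]})-\rho_{i'}(X^{[r_0,r_1]})$ over $r_1$ indeed gives zero, but positivity places no constraint on the signs of the individual defects: one block can have row sums $(0.6,0.4)$ and another $(0.4,0.6)$, the defects cancel, and every entry stays nonnegative. Nothing in your argument rules this out. The missing idea is the paper's generalized Tinhofer lemma (Lemma \ref{compactGraphCopyTheory:lemma:GeneralizedTinhoffer}): if a doubly stochastic matrix commutes with the adjacency matrix of a graph in which every vertex has equal in- and out-degree, then every entry joining two vertices of \emph{different} in-degrees is zero. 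Applied to the full matrix $X$ and the union graph $\Gamma^{(R)}$, this kills precisely the cross-degree entries inside each block, so that in the identity $e X^{[r_0,r_1]} A^{\Gamma} = e A^{\Gamma} X^{[r_0,r_1]}$ the degree-weighted sum collapses: $\sum_{0 \le h < \nu} c_{A^{\Gamma},h} X^{[r_0,r_1]}_{hj} = c_{A^{\Gamma},j}\, c_{X^{[r_0,r_1]},j}$. Dividing by $c_{A^{\Gamma},j} > 0$ turns the commutation relation into the fixed-point equation $c_{X^{[r_0,r_1]}}\, S = c_{X^{[r_0,r_1]}}$ for the column-normalized stochastic matrix $S = \bigl( A^{\Gamma}_{ij} / c_{A^{\Gamma},j} \bigr)$, which is irreducible since $\Gamma$ is connected; hence the column-sum vector of the block is constant, and the row-sum statement follows symmetrically. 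This lemma is the uniform mechanism you were searching for --- it is exactly where double stochasticity (positivity) of the global $X$ interacts with the seed hypothesis --- and without it your proof covers only regular $\Gamma$, excluding the trees and lines that the theorem is meant to include.
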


\begin{remark}
\label{compactGraphCopyTheory:remark:doubleCopyTheorem}
If $\Gamma$ is ``un-directed and $R=2$,''
 Theorem \ref{compactGraphCopyTheory:theorem:main} is the same as Tinhofer's theorem
\cite{Tinhofer:aNoteOnCompactGraphs}.
\end{remark}

The following lemma is a slight generalization of Tinhofer's lemma \cite{Tinhofer:graphIsomorphismAndTheoremsOfBirkhoffType}:
\begin{lemma}[Generalized Tinhofer's Lemma ]
\label{compactGraphCopyTheory:lemma:GeneralizedTinhoffer}
Let $\Gamma$ be a seed graph with $n$-vertices
 and $X$ a doubly stochastic matrix of size $n$-by-$n$
such that $A^{\Gamma} X = X A^{\Gamma}$ holds, where $A^{\Gamma}$ is the adjacency matrix of $\Gamma$.

For $0 \le i,j < n$,
if the in-degrees of $i$ and $j$ are different,
then $X_{i,j} = 0$ holds.
\end{lemma}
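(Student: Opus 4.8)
The plan is to read off from the commutation relation $A^{\Gamma} X = X A^{\Gamma}$ that the in-degree vector is fixed by $X$, and then to use double stochasticity through a single non-negative quadratic sum that forces every ``degree-mismatched'' entry of $X$ to vanish.

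First I would fix notation for the degrees. Let $\mathbf{1}$ be the all-ones vector and let $d \in \mathbb{R}^n$ have $v$th entry $d_v$ equal to the in-degree of $v$. Since $d_v = \sum_{0 \le i < n} A^{\Gamma}_{i,v}$ is the $v$th column-sum of $A^{\Gamma}$, we have $\mathbf{1}^T A^{\Gamma} = d^T$; and because $\Gamma$ is a seed graph, the in-degree equals the out-degree at every vertex, so the row-sums of $A^{\Gamma}$ give the same vector, $A^{\Gamma} \mathbf{1} = d$. Using that $X$ is doubly stochastic, so $X \mathbf{1} = \mathbf{1}$, I would multiply the commutation identity on the right by $\mathbf{1}$ to obtain $d = A^{\Gamma} \mathbf{1} = A^{\Gamma} X \mathbf{1} = X A^{\Gamma} \mathbf{1} = X d$, establishing the fixed-vector relation $X d = d$.

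The heart of the argument is the identity $\sum_{0 \le i,j < n} X_{i,j} (d_i - d_j)^2 = 0$. Expanding the square into three sums, the $d_i^2$ part collapses to $\sum_i d_i^2$ via the row-sums $\sum_j X_{i,j} = 1$, the $d_j^2$ part collapses to $\sum_j d_j^2$ via the column-sums $\sum_i X_{i,j} = 1$, and the cross term $-2 \sum_{i,j} X_{i,j} d_i d_j = -2 \sum_i d_i (X d)_i$ collapses to $-2 \sum_i d_i^2$ via $X d = d$; the three contributions cancel exactly. Since every $X_{i,j} \ge 0$ and every $(d_i - d_j)^2 \ge 0$, a vanishing sum of non-negative terms forces each term to be zero, so $d_i \neq d_j$ implies $X_{i,j} = 0$, which is precisely the claim.

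The step I expect to be the crux is spotting that the right quantity to compute is the $X$-weighted sum of squared degree differences; once that object is in hand, the cancellation is automatic. The only hypothesis-sensitive point is that the seed property is what lets the row-sum and column-sum readings of $A^{\Gamma}$ agree on the single vector $d$: for a general digraph $A^{\Gamma}\mathbf{1}$ is the out-degree vector, so without in-degree $=$ out-degree the relation $Xd = d$ would concern the wrong degree, and the conclusion about in-degrees would not follow.
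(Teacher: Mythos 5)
Your proof is correct, and it takes a genuinely different route from the paper, which prints no argument for this lemma at all: the paper defers to Tinhofer's original proof, which, after establishing that the degree vector $d$ is fixed by $X$, proceeds by an extremal induction on degree classes (for a vertex $i$ of maximal degree, $d_i=\sum_j X_{i,j}d_j$ is a weighted average of values at most $d_i$, so row $i$ is supported on the top degree class; column sums then clear the complementary block, and one recurses downward). You replace that induction with the single identity $\sum_{i,j}X_{i,j}(d_i-d_j)^2=0$, whose three pieces collapse via the row sums, the column sums, and $Xd=d$, after which non-negativity finishes everything in one stroke. What your version buys is a short, self-contained proof and a cleaner underlying principle: any doubly stochastic matrix fixing a vector $d$ has its support inside the level sets of $d$, the graph entering only through producing the fixed vector. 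One refinement to your closing remark: the seed hypothesis is needed only by your particular derivation of $Xd=d$ (right-multiplication by $\mathbf{1}$). Left-multiplying $A^{\Gamma}X=XA^{\Gamma}$ by $\mathbf{1}^T$ gives $(d^{\mathrm{in}})^T X=(d^{\mathrm{in}})^T$ for an arbitrary digraph, and your quadratic identity runs verbatim with this left-fixed vector (the cross term then collapses through column sums rather than row sums), so the in-degree conclusion holds even without in-degree equal to out-degree; the seed property is genuinely used elsewhere in the paper, e.g., in Lemma \ref{compactGraphCopyTheory:lemma:KeyLemmaForMainTheorem}, but is not actually needed for this lemma.
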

The proof is given by almost the same argument as Tinhofer's original one.
Please refer to \cite{Tinhofer:graphIsomorphismAndTheoremsOfBirkhoffType}.

\begin{lemma}\label{compactGraphCopyTheory:lemma:KeyLemmaForMainTheorem}
Let $\Gamma = (\{ 0, 1,\dots, \nu-1 \}, E)$ be a seed graph.
Let $X \in \mathcal{D}_{\nu R} [ \mathcal{L}_{ \Gamma^{(R)} }]$.

For any $0 \le r_0, r_1 < R$,
$X^{[r_0, r_1]}$ satisfies weak row-constant constraints
and 
weak column-sum constraints.
\end{lemma}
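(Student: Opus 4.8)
The plan is to reduce the statement to a single $\nu$-by-$\nu$ block and to exploit that $A^{\Gamma^{(R)}}$ is block-diagonal with the single block $A:=A^{\Gamma}$ repeated $R$ times. First I would record the block form of the commutation relation. Since $X\in\mathcal{D}_{\nu R}[\mathcal{L}_{\Gamma^{(R)}}]$ we have $A^{\Gamma^{(R)}}X=XA^{\Gamma^{(R)}}$; reading this equation block-by-block and using that the only nonzero block of $A^{\Gamma^{(R)}}$ in block-row $r_0$ (resp.\ block-column $r_1$) is the diagonal one, I obtain $A\,X^{[r_0,r_1]}=X^{[r_0,r_1]}A$ for every pair $(r_0,r_1)$. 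Thus each block $Y:=X^{[r_0,r_1]}$ is a nonnegative matrix commuting with $A$, and it remains to show that such a $Y$ has all row-sums equal and all column-sums equal.

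Next I would invoke a degree-vanishing property of the entries of $Y$. Let $d_i$ denote the common in- and out-degree of vertex $i$ in $\Gamma$ (equal because $\Gamma$ is a seed graph) and set $D:=\mathrm{diag}(d_0,\dots,d_{\nu-1})$. The disjoint union $\Gamma^{(R)}$ still has equal in- and out-degree at every vertex, and the degree of $r\nu+i$ in $\Gamma^{(R)}$ equals $d_i$. Since the proof of the Generalized Tinhofer Lemma (Lemma \ref{compactGraphCopyTheory:lemma:GeneralizedTinhoffer}) uses only this balance of degrees together with the double stochasticity of $X$, and never the connectedness of the graph, it applies verbatim to $X$ and $\Gamma^{(R)}$. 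It yields $X_{a,b}=0$ whenever the degrees of $a$ and $b$ differ, that is, $Y_{i,j}=0$ whenever $d_i\neq d_j$.

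Now I would turn the commutation into an eigen-relation for the row-sum vector. Write $r:=Y\mathbf{1}$ for the vector of row-sums, $\mathbf{1}$ being the all-ones vector. Multiplying $AY=YA$ on the right by $\mathbf{1}$ and using $A\mathbf{1}=d$ (the out-degree, hence degree, vector) gives $Ar=Yd$. The key point is that, by the vanishing property, $(Yd)_i=\sum_j Y_{i,j}d_j=d_i\sum_j Y_{i,j}=d_i r_i$, so $Yd=Dr$ and therefore $Ar=Dr$. As every vertex of the connected graph $\Gamma$ has $d_i\ge 1$, this rewrites as $Wr=r$ with $W:=D^{-1}A$ row-stochastic; equivalently $r_i$ equals the average of $r_j$ over the out-neighbours of $i$. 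A connected seed graph is strongly connected (a weakly connected digraph with balanced degrees is Eulerian), so a maximum-principle argument forces $r$ constant: at a vertex attaining $\max_i r_i$ every out-neighbour attains the maximum too, and strong connectivity propagates this to all vertices. This is exactly the weak row-sum constraint for $Y$. The weak column-sum constraint follows by the symmetric computation applied to $c:=Y^{T}\mathbf{1}$, using $\mathbf{1}^{T}A=d^{T}$ and the in-neighbour averaging $A^{T}c=Dc$.

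The step I expect to be the main obstacle is the passage from commutation to constancy of the sums: nonnegativity and commutation alone do not force equal row-sums (a generic element of the commutant of $A$ fails this), so the argument genuinely needs the degree-vanishing property of Lemma \ref{compactGraphCopyTheory:lemma:GeneralizedTinhoffer} to collapse $Yd$ into $Dr$, together with the strong connectivity of $\Gamma$ to run the maximum principle. A secondary subtlety is justifying the use of that lemma on the disconnected graph $\Gamma^{(R)}$, which is legitimate precisely because its proof never uses connectedness.
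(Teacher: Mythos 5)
Your proof is correct and follows essentially the same route as the paper's: block-decompose the commutation relation into $A^{\Gamma}X^{[r_0,r_1]}=X^{[r_0,r_1]}A^{\Gamma}$, invoke the Generalized Tinhofer Lemma to kill entries joining vertices of different degree, and deduce that the row-sum (resp.\ column-sum) vector of each block is an eigenvalue-$1$ eigenvector of a stochastic normalization of $A^{\Gamma}$, hence constant by connectivity. You additionally make explicit two points the paper glosses over --- that the degree-vanishing lemma may be applied to the disconnected union $\Gamma^{(R)}$ because its proof never uses connectedness, and that weak connectivity plus balanced degrees yields the strong connectivity needed for the maximum-principle (Perron--Frobenius) step --- but these are refinements of the same argument, not a different one.
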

\begin{proof}
If $X^{[r_0, r_1]}$ is a zero-matrix,
$X^{[r_0, r_1]}$ satisfies week row-constraints and week column-constraints..

From here, let us assume $X^{[r_0, r_1]}$ is not a zero-matrix.

This proof is an analogue of 
Tinhofer's argument \cite[Theorem 5]{Tinhofer:aNoteOnCompactGraphs}.
Let $e = (1,1,\dots,1)$ be the all 1 vector of length $n$.
For a matrix $M$, let $c_{M}$ (resp. $r_M$) denote $eM$ (resp. $M e^T$),
\textit{i.e.}, $c_M$ (resp. $r_M$) is the vector consists of the column (resp. row) sum of $M$.
We shall show that $c_{X^{[r_0, r_1]}}$ and $r_{X^{[r_0, r_1]}}$ are constant vectors.

By the assumption on $X$ and $A^{\Gamma}$, the following holds:
\[
 e X^{[r_0, r_1]} A^{\Gamma} = e A^{\Gamma} X^{[r_0, r_1]}.
\]
L.H.S. $c_{X} A^{\Gamma}$is equal to
\[
 ( \sum_{0 \le h < \nu} c_{X^{[r_0, r_1]}, h} A^{\Gamma}_{h 1},
 \dots, \sum_{0 \le h < \nu} c_{X^{[r_0, r_1]}, h} A^{\Gamma}_{h n} ),
\]
where $c_{X^{[r_0, r_1]}, h}$ is the $h$th element of $c_X^{[r_0, r_1]}$.
On the other hand, R.H.S. $c_{A^{\Gamma}} X^{[r_0, r_1]}$ is equal to
\[
 (
 \sum_{0 \le h < \nu} c_{ A^{\Gamma}, h} X^{[r_0, r_1]}_{h1}, \dots, \sum_{0 \le h < \nu} c_{ A^{\Gamma}, h} X^{[r_0, r_1]}_{hn} ).
\]
By a generalized Tinhofer's lemma (Lemma\ref{compactGraphCopyTheory:lemma:GeneralizedTinhoffer}),
$\sum_{0 \le h < \nu} c_{ A^{\Gamma}, h} X^{[r_0, r_1]}_{hj} = c_{A^{\Gamma} , j} c_{X^{[r_0, r_1]}, j}$ holds for $ 0 \le j < \nu$.
Therefore
\begin{eqnarray*}
\text{$j$th element of L.H.S.}
&=&\\
 \sum_{0 \le h < \nu} c_{X^{[r_0, r_1]}, h} A_{\Gamma, hj}
&=&
 c_{A^{\Gamma}, j}  c_{X^{[r_0, r_1]}, j} \\
&=&
\text{$j$th element of R.H.S.}
\end{eqnarray*}

Since $\Gamma$ is connected, $c_{A^{\Gamma}, j} \neq 0$.
By dividing $j$th element with $c_{A^{\Gamma}, j}$,
we have
\[
 \sum_{0 \le h < \nu} c_{X^{[r_0, r_1]}, h} \frac{A^{\Gamma}_{hj}}{ c_{A^{\Gamma}, j} }
 = c_{X^{[r_0, r_1]}, j}.
\]
By using the notation $S := ( \frac{A^{\Gamma}_{ij}}{ c_{A^{\Gamma}, j} } )$,
the equation above implies
\[
 c_{X^{[r_0, r_1]}} S = c_{X^{[r_0, r_1]}}, \text{ equivalently, }
 S^T c_{X^{[r_0, r_1]}}^T = c_{X^{[r_0, r_1]}}^T.
\]
Therefore $c_{X^{[r_0, r_1]}}^T$ is an eigenvector of $S^T$ of eigenvalue $1$.
By the construction of $S$, $S^T$ is a stochastic matrix.
Remember $\Gamma$ is connected.
Therefore
$c_{X^{[r_0, r_1]}}$ is a constant vector,
by general linear algebra and probabilistic theory.

Hence $c_{X^{[r_0, r_1]}} = (c, c, \dots, c)$ for some $c > 0$.
By a similar discussion, $r_{X^{[r_0, r_1]}} = (r, r, \dots, r)^T$ for some $r > 0$.
\end{proof}

\begin{corollary}
\label{hagiwaraPerm2012:cor:colocolo}
Let $\Gamma$ be a compact seed graph with $\nu$-vertices
and $\mathcal{L}_{D^{(R)}}$ a doubly stochastic constraint
in Note \ref{hagiwaraPermutationCodes2012:note:doublyStochasticMatrix},
for an $R$-by-$R$ matrix.
Let $\mathcal{M}_\Gamma$ be a set $\{ \mathcal{M}^{[r_0, r_1]} \}$
of quasi-homogeneous constrains,
where 
$\mathcal{M}^{[r_0, r_1]} := \mathcal{L}_{\Gamma}$.

Then we have
\[
\mathcal{D}[ \mathcal{L}_{\Gamma^{(R)}} ]
=
\mathcal{D}[ \mathcal{L}_{\Gamma} \boxplus \mathcal{L}_{D^{(R)}} ]
\]
\end{corollary}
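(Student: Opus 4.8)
The plan is to prove the set equality by unwinding both sides into explicit families of linear constraints on the $R^2$ blocks $X^{[r_0,r_1]}$ of a $\nu R$-by-$\nu R$ matrix $X$, and then matching them up with the help of the two lemmas already proved. First I would record the decisive structural observation: since $A^{\Gamma^{(R)}}$ is block-diagonal with $A^{\Gamma}$ in each diagonal block, the global commuting equation $X A^{\Gamma^{(R)}} = A^{\Gamma^{(R)}} X$ is equivalent, block by block, to $X^{[r_0,r_1]} A^{\Gamma} = A^{\Gamma} X^{[r_0,r_1]}$ for all $0 \le r_0,r_1 < R$. Hence $\mathcal{D}_{\nu R}[\mathcal{L}_{\Gamma^{(R)}}]$ is exactly the set of $\nu R$-by-$\nu R$ doubly stochastic matrices each of whose blocks commutes with $A^{\Gamma}$. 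On the other side, since $\mathcal{L}_{\Gamma} = \mathcal{L}_D \cup \{X A^{\Gamma} = A^{\Gamma} X\}$ is quasi-homogeneous and both the commuting constraints and positivity are homogeneous, the merged constraint $\mathcal{L}_{\Gamma}^{\square}$ consists of per-block positivity, per-block weak row-sum and weak column-sum constraints, and the per-block commuting constraints; together with the holding constraints coming from $\mathcal{L}_{D^{(R)}}$ these constitute $\mathcal{L}_{\Gamma} \boxplus \mathcal{L}_{D^{(R)}}$.

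For the inclusion $\mathcal{D}_{\nu R}[\mathcal{L}_{\Gamma} \boxplus \mathcal{L}_{D^{(R)}}] \subseteq \mathcal{D}_{\nu R}[\mathcal{L}_{\Gamma^{(R)}}]$ I would argue as follows. By Lemma \ref{hagiwaraPerm2012:lem:consolidationIsDSC} the consolidation is a doubly stochastic constraint, so any $X$ satisfying it is doubly stochastic of size $\nu R$. The per-block commuting constraints $X^{[r_0,r_1]} A^{\Gamma} = A^{\Gamma} X^{[r_0,r_1]}$ appear verbatim in $\mathcal{L}_{\Gamma}^{\square}$, being homogeneous and hence untouched by the merge, and by the block-diagonal observation they reassemble to $X A^{\Gamma^{(R)}} = A^{\Gamma^{(R)}} X$. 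Thus $X \in \mathcal{D}_{\nu R}[\mathcal{L}_{\Gamma^{(R)}}]$, and this direction needs nothing beyond the already-established fact that the consolidation is doubly stochastic.

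The reverse inclusion is where the real content lives. Given $X \in \mathcal{D}_{\nu R}[\mathcal{L}_{\Gamma^{(R)}}]$, positivity and the per-block commuting constraints are immediate, so the only thing to check is that $X$ also satisfies the per-block weak row/column-sum constraints and the holding constraints. The weak-sum constraints on each block are precisely the conclusion of Lemma \ref{compactGraphCopyTheory:lemma:KeyLemmaForMainTheorem}, which is exactly where the seed-graph hypothesis (connectedness, in-degree equal to out-degree) and the compactness of $\Gamma$ are used; without them a block need not have constant row and column sums. Granting this, the constant sum property (Lemma \ref{hagiwaraPermutationCodes2012:proposition:constantSumProperty}) assigns to each block $X^{[r_0,r_1]}$ a single common row/column sum $s^{[r_0,r_1]}$, and the holding constraints then reduce to $\sum_{r_1} s^{[r_0,r_1]} = 1$ and $\sum_{r_0} s^{[r_0,r_1]} = 1$; these are nothing but the $(r_0\nu)$th row-sum and the $(r_1\nu)$th column-sum of $X$, which equal $1$ because $X$ is doubly stochastic. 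Hence $X$ satisfies every constraint of $\mathcal{L}_{\Gamma} \boxplus \mathcal{L}_{D^{(R)}}$.

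Putting the two inclusions together yields the claimed equality. The main obstacle is entirely concentrated in verifying the weak row-sum and weak column-sum constraints on the blocks for the $\subseteq$ direction; everything else is bookkeeping that merely matches definitions. Since that obstacle is resolved by Lemma \ref{compactGraphCopyTheory:lemma:KeyLemmaForMainTheorem}, the corollary is in essence an assembly of that lemma, Lemma \ref{hagiwaraPerm2012:lem:consolidationIsDSC}, and the block-diagonal decoupling of the commuting constraint.
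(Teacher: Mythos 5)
Your proposal is correct and follows essentially the same route as the paper's proof: the block-diagonal decoupling of $X A^{\Gamma^{(R)}} = A^{\Gamma^{(R)}} X$, the easy inclusion from the fact that the consolidation is a doubly stochastic constraint (Lemma \ref{hagiwaraPerm2012:lem:consolidationIsDSC}), and the hard inclusion from Lemma \ref{compactGraphCopyTheory:lemma:KeyLemmaForMainTheorem}; indeed you are somewhat more careful than the paper, which asserts that the leftover constraints are only the weak row/column sums, whereas you explicitly check the holding constraints via the constant sum property (Lemma \ref{hagiwaraPermutationCodes2012:proposition:constantSumProperty}). One small correction: compactness of $\Gamma$ is not used in Lemma \ref{compactGraphCopyTheory:lemma:KeyLemmaForMainTheorem} (only the seed-graph property is), so the polytope equality holds for any seed graph, compactness entering only downstream in Theorem \ref{compactGraphCopyTheory:theorem:main}.
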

\begin{proof}
By using the block component $X^{ [ij ]}$,
the equation $ X A^{\Gamma^{(R)}} = A^{\Gamma^{(R)}} X$
is equivalent to
$ X^{[ij]} A^{\Gamma} = A^{\Gamma} X^{[ij]},
$ for all $0 \le i, j < R.$
Therefore $\mathcal{L}_{\Gamma^{(R)}} \subset
 \mathcal{M}_{\Gamma} \boxplus \mathcal{L}_{D^{(R)} }$.
It implies $\mathcal{D}[ \mathcal{L}_{\Gamma^{(R)}} ] \supset \mathcal{D}[ \mathcal{M}_{\Gamma} \boxplus \mathcal{L}_{D^{(R)} } ]$.

From here, we show that  
$X \in \mathcal{D}[ \mathcal{M}_{\Gamma} \boxplus \mathcal{L}_{D^{(R)} } ]$
for any $X \in \mathcal{D}[ \mathcal{L}_{\Gamma^{(R)}} ]$.
Note that
$\mathcal{M}_{\Gamma} \boxplus \mathcal{L}_{D^{(R)} } \setminus
\mathcal{L}_{\Gamma^{(R)}}$ consists of weak row-sum constraints
and weak column-sum constraints.
By Lemma\ref{compactGraphCopyTheory:lemma:KeyLemmaForMainTheorem},
if $X \in \mathcal{D}[ \mathcal{L}_{\Gamma^{(R)}} ]$ satisfies
weak-row constraints,
then $X \models \mathcal{M}_{\Gamma} \boxplus \mathcal{L}_{D^{(R)} }$,
in other words,
$X \in \mathcal{D}[ \mathcal{M}_{\Gamma} \boxplus \mathcal{L}_{D^{(R)} } ]$.
\end{proof}

For a set $G$ of permutation matrices and elements $(\sigma | g_0, g_1, \dots, g_{R-1}),
 (\tau | h_0, h_1, \dots, h_{R-1}) \in G \wr S_R$,
the matrix product of them satisfies
\begin{eqnarray*}
& &(\sigma | g_0, g_1, \dots, g_{R-1}) (\tau | h_0, h_1, \dots, h_{R-1}) \\
&=&
(\sigma \tau | g_{0} h_{\tau(0)}, g_{1} h_{\tau(1)}, \dots, g_{R-1} h_{\tau(R-1)}).
\end{eqnarray*}
Thus if $G$ is a group, a wreath product $G \wr S_R$ is a group too.
Let $\Gamma = (\{ 0, 1,\dots,n-1 \}, E)$ be a connected graph
and $\mathrm{Aut}( \Gamma )$ the set of automorphisms of $\Gamma$.
Since $\mathrm{Aut}(\Gamma)$ is a group and 
an automorphism is a permutation on the vertex set $\{ 0, 1, \dots, n-1 \}$,
$\mathrm{Aut}( \Gamma )$ is regarded as a subgroup of the set of $n$-by-$n$ permutation matrices.

Now we state the following:
\begin{lemma}\label{compactGraphCopyTheory:lemma:wreathProductAuto}
Let $\Gamma = (\{ 0, 1,\dots, n-1 \}, E)$ be a connected graph and $\mathrm{Aut}(\Gamma)$ be the automorphism group of $\Gamma$.
For any positive integer $R$,
$\mathrm{Aut}( \Gamma^{(R)} )$ satisfies
\[
\mathrm{Aut}( \Gamma^{(R)} ) 
=
\mathrm{Aut}(\Gamma) \wr S_R,
\]
where $\Gamma^{(R)}$ is a union of $R$-$\Gamma$s.
\end{lemma}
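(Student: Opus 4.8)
The plan is to establish the two inclusions separately, after reducing the defining identity $X A^{\Gamma^{(R)}} = A^{\Gamma^{(R)}} X$ to its blocks. Writing a permutation matrix $X$ of size $n R$ in $n$-by-$n$ blocks $X^{[i,j]}$ and recalling that $A^{\Gamma^{(R)}}$ is block diagonal with every diagonal block equal to $A^{\Gamma}$, one checks --- exactly as in the proof of Corollary \ref{hagiwaraPerm2012:cor:colocolo} --- that $X A^{\Gamma^{(R)}} = A^{\Gamma^{(R)}} X$ is equivalent to the block identities $X^{[i,j]} A^{\Gamma} = A^{\Gamma} X^{[i,j]}$ for all $0 \le i,j < R$. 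I would take this block reduction as the common entry point for both directions.

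For the inclusion $\mathrm{Aut}(\Gamma) \wr S_R \subseteq \mathrm{Aut}(\Gamma^{(R)})$, I would start from $X = (\sigma \mid g_0, \dots, g_{R-1})$ with each $g_i \in \mathrm{Aut}(\Gamma)$. Its blocks are $X^{[i,j]} = g_i$ if $i = \sigma(j)$ and $X^{[i,j]} = \mathbf{0}$ otherwise. A zero block satisfies the block identity trivially, while a nonzero block equals some $g_i$ and hence satisfies $g_i A^{\Gamma} = A^{\Gamma} g_i$ by the definition of automorphism. Thus all block identities hold and $X \in \mathrm{Aut}(\Gamma^{(R)})$.

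The substantive direction is $\mathrm{Aut}(\Gamma^{(R)}) \subseteq \mathrm{Aut}(\Gamma) \wr S_R$. Here I would exploit that no edge of $\Gamma^{(R)}$ joins two distinct copies: by the block-diagonal form of $A^{\Gamma^{(R)}}$, an edge between vertex $a$ of copy $j$ and vertex $b$ of copy $i$ forces $i = j$. Fix a copy index $j$ and track the images under $X$ of the vertices of the $j$th copy. If $b$ and $b'$ are adjacent in $\Gamma$, then the corresponding vertices of copy $j$ are adjacent in $\Gamma^{(R)}$; since $X$ preserves adjacency, their images lie in a single copy. Because $\Gamma$ is connected, propagating this along (weak) paths shows that the entire $j$th copy is sent into one copy, which I call $\sigma(j)$. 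In block terms, block column $j$ of $X$ has its only nonzero block in block row $\sigma(j)$; as that block then collects all $n$ ones of block column $j$ in distinct rows, it is a full $n$-by-$n$ permutation matrix $g_{\sigma(j)}$. The block reduction gives $g_{\sigma(j)} A^{\Gamma} = A^{\Gamma} g_{\sigma(j)}$, so $g_{\sigma(j)} \in \mathrm{Aut}(\Gamma)$; and $\sigma$ is a bijection because two distinct block columns cannot map into the same block row without overfilling it with $2n$ ones. Reading off the block pattern yields $X = (\sigma \mid g_0, \dots, g_{R-1}) \in \mathrm{Aut}(\Gamma) \wr S_R$.

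I expect the confinement step --- showing that each copy is sent wholesale into a single copy --- to be the main obstacle, and it is precisely where connectedness of $\Gamma$ is indispensable: were $\Gamma$ disconnected, a copy could be distributed across several target copies and $X$ would not be forced into wreath form. I would phrase the propagation argument so that it uses only the preservation of the underlying adjacency relation in both directions, so that it applies equally to directed seed graphs.
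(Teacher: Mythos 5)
Your proposal is correct and takes essentially the same route as the paper's proof: one inclusion is immediate, and for the converse the connectedness of $\Gamma$ forces an automorphism of $\Gamma^{(R)}$ to permute the $R$ copies, after which each nonzero block is a permutation matrix commuting with $A^{\Gamma}$, hence an automorphism of $\Gamma$. The only difference is level of detail: the paper simply invokes the fact that an automorphism permutes connected components, whereas you prove that confinement step explicitly at the block-matrix level (and your bijectivity and block-permutation-matrix checks are exactly the details the paper leaves implicit).
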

\begin{proof}
It is trivial that the right hand side is included in the left hand side.
We show the converse inclusion relation.
Let $X \in \mathrm{Aut}( \Gamma^{(R)} )$.
Since $\Gamma$ is connected and $\Gamma^{(R)}$ is a union,
$X$ is a permutation on connected components of $\Gamma^{(R)}$.
It implies that 
$X$ is written in a form $(\sigma | X_1, X_2, \dots, X_R)$ by using some
permutations $X_i$ and a permutation $\sigma \in S_R$.
Since each connected component of $\Gamma^{(R)}$ is isomorphic to $\Gamma$,
$X_i \in \mathrm{Aut}(\Gamma)$ holds.
This implies $X \in \mathrm{Aut}(\Gamma) \wr S_R$.
\end{proof}

\begin{proof}[Proof for Theorem \ref{compactGraphCopyTheory:theorem:main}]
By Corollary \ref{hagiwaraPerm2012:cor:colocolo},
$\mathrm{Ver}( \mathcal{D}_{\nu R}[ \mathcal{L}_{\Gamma^{(R)} } ])
\subset S_{\nu R}$ holds.
By equation (\ref{hagiwaraPerm2012:eqn:graphAut}),
$\mathrm{Ver}( \mathcal{D}_{\nu R}[ \mathcal{L}_{\Gamma^{(R)} } ])
=
\mathrm{Aut}( \mathcal{L}_{\Gamma^{(R)} } )$
holds.
\end{proof}

\section{Further Discussion}
\subsection{Number of Linear Constraints}
\label{compactGraphCopyTheory:section:additionalEquationsMeansToReduceVariables}
Considering the computational cost of linear programming,
the reader may be anxious about the additional cost due to the equation $X A^\Gamma = A^\Gamma X$.
However, the additional equations may not increase the computational cost.
Conversely, it may decrease the cost,
by reducing the number of variables.

Here we present an example with a graph $\Gamma$ of type televis
(see Example \ref{hagiwaraPerm2012:expl:lineTelevis}).
Let us observe the doubly stochastic constraint $\mathcal{L}_{\Gamma^{(R)}}$
associated with an $R$-union graph $\Gamma^{(R)}$.
It is a doubly stochastic constraint for a $2R$-by-$2R$ matrix.
By writing the variable matrix as $X$,
$\mathcal{L}_{\Gamma^{(R)} }$ consists of
\begin{eqnarray*}
\sum_{0 \le i < 2R} X_{i, j_0} &=& 1, \text{ for } 0 \le j_0 < 2R,\\
\sum_{0 \le j < 2R} X_{i_0, j} &=& 1, \text{ for } 0 \le i_0 < 2R,\\
X_{i_0, j_0} &\ge& 0, \text{ for } 0 \le i_0, j_0 < 2R,\\
X^{[r_0, r_1]}_{0,0} - X^{[r_0, r_1]}_{1,1} &=& 0, \text{ for } 0 \le r_0, r_1 < R,\\
X^{[r_0, r_1]}_{0,1} - X^{[r_0, r_1]}_{1,0} &=& 0, \text{ for } 0 \le r_0, r_1 < R.
\end{eqnarray*}
Totally, they are $6 R^2 + 4R (= 2R+2R+(2R)^2 + R^2 + R^2 )$ linear constraints.

Our idea to reduce the computational cost is the following:
we regard a linear constraint
$X^{[r_0, r_1]}_{0,0} - X^{[r_0, r_1]}_{1,1} = 0$
as 
 substitution
$X^{[r_0, r_1]}_{0,0} = X^{[r_0, r_1]}_{1,1}$.
Let $Y^{[r_0, r_1]}_0$ denote $X^{[r_0, r_1]}_{0,0}$
and $X^{[r_0, r_1]}_{1,1}$.
Similarly 
Let $Y^{[r_0, r_1]}_1$ denote $X^{[r_0, r_1]}_{0,1}$
and $X^{[r_0, r_1]}_{1,0}$.
Then the number of variables are reduced from $4 R^2$ to $2 R^2$.
Furthermore, the linear constraint $\mathcal{L}_{\Gamma^{(R)}}$ is converted to
a doubly stochastic matrix $\mathcal{L}'_{\Gamma^{(R)}}$.
\begin{eqnarray*}
\sum_{0 \le r < R} Y^{r, r_1}_0 + Y^{r, r_1}_1  &=& 1, \text{ for } 0 \le r_1 < R,\\
\sum_{0 \le j < R} Y^{r_0, r}_0 + Y^{r_0, r}_1 &=& 1, \text{ for } 0 \le r_0 < R,\\
Y^{r_0, r_1}_{0}, Y^{r_0, r_1}_{1} &\ge& 0, \text{ for } 0 \le r_0, r_1 < R.
\end{eqnarray*}
Totally, they are only $2 R^2 + 2R (= R+R+2 R^2 )$ linear constraints.
Remember that
$ \mathcal{L}'_{\Gamma^{(R)}} \supset \mathcal{L}_{D^{(2R)}}$.
However, we have $\# \mathcal{L}'_{\Gamma^{(R)}} < \# \mathcal{L}_{D^{(2R)}}$,
since
$\mathcal{L}_{D^{(2R)}}$ consists of $4 R^2 + 4R$ constraints.

In this case with $R$-union televis $\Gamma^{(R)}$, the linear programming problem for error-correction is
to maximize the following value
\begin{eqnarray*}
& & \sum_{0 \le r_0, r_1 < R}
(\lambda_{2 r_0} \mu_{2 r_1}
 + \lambda_{2 r_0 +1} \mu_{2 r_1+1}) Y^{[r_0, r_1]}_0\\
&+&
 \sum_{0 \le r_0, r_1 < R}
(\lambda_{2 r_0} \mu_{2 r_1 +1} + \lambda_{2 r_0+1} \mu_{2 r_1}) Y^{[r_0, r_1]}_1,
\end{eqnarray*}
where $\lambda$ is a received vector and $\mu$ is the initial vector of permutation code.

\subsection{Distance Enlargement}
In this subsection,
we discuss a conjugated permutation code.
The following is directly obtained from definitions.
\begin{theorem}
Let $\Gamma$ be a graph and
and $P^{\sigma}$ a permutation matrix associated with a permutation $\sigma$.
Then we have
\[
\mathcal{D}_{n}[ \mathcal{L}_{\sigma (\Gamma)} ] 
=
\{ P^{\sigma} X (P^{\sigma})^{-1}
 \mid X \in \mathcal{D}_{n}[ \mathcal{L}_{\Gamma} ]
\}
,
\]
\[
\mathrm{Ver}( \mathcal{D}_{n}[ \mathcal{L}_{\sigma (\Gamma)} ] )
=
\{ P^{\sigma} X (P^{\sigma})^{-1}
 \mid X \in \mathrm{Ver}( \mathcal{D}_{n}[ \mathcal{L}_{\Gamma} )] \}
\]
where $\mathcal{L}_\Gamma$ is a doubly stochastic constraint associated with $\Gamma$,
$\mathcal{D}_n [ \mathcal{L} ]$ is the doubly stochastic polytope for $\mathcal{L}$,
and $\mathrm{Ver}( \mathcal{D} )$ is the set of vertices for $\mathcal{D}$.

Hence $\sigma(\gamma)$ is compact if and only if $\Gamma$ is compact.
\end{theorem}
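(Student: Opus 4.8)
The plan is to reduce every claim to a single map, the conjugation $\phi(X) := P^{\sigma} X (P^{\sigma})^{-1}$, and to check that it carries the constraint system $\mathcal{L}_{\Gamma}$ onto the constraint system $\mathcal{L}_{\sigma(\Gamma)}$. First I would establish the polytope identity $\mathcal{D}_n[\mathcal{L}_{\sigma(\Gamma)}] = \{ \phi(X) \mid X \in \mathcal{D}_n[\mathcal{L}_{\Gamma}] \}$ by proving, for an arbitrary matrix $X$, the equivalence $\phi(X) \models \mathcal{L}_{\sigma(\Gamma)} \iff X \models \mathcal{L}_{\Gamma}$. Since $\phi$ is a bijection of $\mathrm{M}_n(\mathbb{R})$ with inverse $Y \mapsto (P^{\sigma})^{-1} Y P^{\sigma}$, this equivalence immediately yields the set equality.

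The equivalence splits along the two pieces of $\mathcal{L}_{\Gamma} = \mathcal{L}_D \cup \{ X A^{\Gamma} = A^{\Gamma} X \}$. For the doubly stochastic part I would observe that left multiplication by $P^{\sigma}$ permutes rows and right multiplication by $(P^{\sigma})^{-1} = (P^{\sigma})^T$ permutes columns; such a relabelling leaves entrywise nonnegativity, every row sum, and every column sum unchanged, so $\phi(X) \models \mathcal{L}_D \iff X \models \mathcal{L}_D$. For the commuting part I would use that the adjacency matrix of $\sigma(\Gamma)$ is $A^{\sigma(\Gamma)} = P^{\sigma} A^{\Gamma} (P^{\sigma})^{-1}$ and compute that $\phi(X) A^{\sigma(\Gamma)} = A^{\sigma(\Gamma)} \phi(X)$ becomes, after cancelling the outer factors $P^{\sigma}$ on the left and $(P^{\sigma})^{-1}$ on the right, literally the equation $X A^{\Gamma} = A^{\Gamma} X$. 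Combining the two pieces gives the required equivalence.

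For the vertex identity I would exploit that $\phi$ is linear, $\phi(c_0 X_0 + c_1 X_1) = c_0 \phi(X_0) + c_1 \phi(X_1)$, and bijective between the two polytopes by the first part. Hence a decomposition $X = c_0 X_0 + c_1 X_1$ with $c_0, c_1 > 0$ and $X_0 \neq X_1$ in $\mathcal{D}_n[\mathcal{L}_{\Gamma}]$ exists if and only if the image decomposition $\phi(X) = c_0 \phi(X_0) + c_1 \phi(X_1)$ exists in $\mathcal{D}_n[\mathcal{L}_{\sigma(\Gamma)}]$; thus $\phi$ sends vertices to vertices in both directions, giving the second displayed equality. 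Finally, running the same conjugation computation on permutation matrices alone shows $\mathrm{Aut}(\sigma(\Gamma)) = P^{\sigma} \mathrm{Aut}(\Gamma) (P^{\sigma})^{-1}$, so conjugating the defining equality $\mathrm{Ver}(\mathcal{D}_n[\mathcal{L}_{\Gamma}]) = \mathrm{Aut}(\Gamma)$ of compactness by $P^{\sigma}$ turns it into $\mathrm{Ver}(\mathcal{D}_n[\mathcal{L}_{\sigma(\Gamma)}]) = \mathrm{Aut}(\sigma(\Gamma))$, with the converse obtained symmetrically via $\sigma^{-1}$; this is the asserted equivalence of compactness.

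I do not expect a genuine obstacle here, since the statement is essentially bookkeeping about conjugation, but the step deserving the most care is the vertex argument. The definition of vertex used in the paper imposes positivity of $c_0, c_1$ without an explicit normalization $c_0 + c_1 = 1$; I would note that for doubly stochastic matrices, reading off any single row sum forces $c_0 + c_1 = 1$ automatically, so the notion coincides with the usual extreme-point notion and the linear bijection $\phi$ genuinely preserves it.
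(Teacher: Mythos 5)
Your proof is correct. The paper gives no proof of this theorem at all---it merely asserts that the statement ``is directly obtained from definitions''---and your conjugation argument (the equivalence $\phi(X) \models \mathcal{L}_{\sigma(\Gamma)} \iff X \models \mathcal{L}_{\Gamma}$ split into the doubly stochastic part and the commuting part, linearity of $\phi$ for the vertex claim, and $\mathrm{Aut}(\sigma(\Gamma)) = P^{\sigma}\,\mathrm{Aut}(\Gamma)\,(P^{\sigma})^{-1}$ for compactness) is exactly the bookkeeping the paper leaves implicit; your closing observation that the paper's unnormalized vertex definition agrees with the usual extreme-point notion, because row sums of doubly stochastic matrices force $c_0 + c_1 = 1$, is a worthwhile extra point of rigor.
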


For a permutation group $G$,
let us consider two values $d_l(G)$ and $d_E(G)$.

The \textbf{minimum Euclidean distance} $d_E$ is defined as
\[
d_E ( G )
 := \min_{g_0, g_1 \in G, g_0 \neq g_1 } || g_0 \mu - g_1 \mu ||^2 /2,
\]
where
$\mu = (1,2,\dots, n) \in \mathbb{R}^{n}$,
\textit{i.e.}, $\mu_i = i+1$ for $0 \le i < n$.
This value is motivated by the Euclidean distance metric
which is considered in the previous researches,
\textit{e.g.},
\cite{Wadayama:LPDecodablePermutationCodesBasedOnLinearlyConstrainedPermutationMatrices}.

For a permutation group $G$,
the \textbf{minimum Kendall-tau distance} $d_l( G )$ is defined as
\[
d_l( G )
 := \min_{g_0, g_1 \in G, g_0 \neq g_1}
 \# \{ (i,j) \mid
     0 \le i<j <n, g_0 g_1^{-1} (i)> g_0 g_1^{-1}(j) \}.
\]
This value is motivated by a distance metric
which is known as Kendall-tau distance in rank modulation researches,
\textit{e.g.},
\cite{Barg:codesInPermutationsAndErrorCorrectionForRankModulation}.

In general,
$d_l ( G_\Gamma ) = d_l ( G_{\Gamma^{(R)}} )$
and
$d_E ( G_\Gamma ) = d_E ( G_{\Gamma^{(R)}} )$
hold.
However, by using a group action,
we may enlarge the distances.

Here we give an example.
Let $\Gamma$ be a televis
and $\sigma$ is a permutation on $\{0,1,2,3\}$
defined as
$\sigma(0) = 0, \sigma(1)=2, \sigma(2)=1,$ and $\sigma(3)=3$.
By routine calculation, we can verify
$d_l ( G_\Gamma ) = 1$,
$d_E ( G_\Gamma ) = 1$,
$d_l ( G_{\Gamma^{(2)} } ) = 1$,
$d_E ( G_{\Gamma^{(2)} } ) = 1$,
$d_l ( G_{\sigma( \Gamma^{(2)} ) } )=2$,
and
$d_E ( G_{\sigma( \Gamma^{(2)} ) } )= 2$.

To characterize which permutation $\sigma$ maximizes these distances
is interesting but not an easy problem.
We leave this problem as an open problem.

\section*{Acknowledgments}
The author thanks Mr.~Justin Kong, Ms.~Catherine Walker and Prof.~J.~B.~Nation for 
their valuable comments and suggestions to improve the quality of the paper.

\end{document}